\newtheorem{cor}{Corollary}[section]
\newtheorem{example}{Example}
\newtheorem{rem}{Remark}[section]
\title{On the convergence of harmonic Ritz vectors and harmonic Ritz values}
\author{Gang Wu\thanks{Department of Mathematics,
China University of Mining and Technology, Xuzhou, 221116, Jiangsu, P.R. China.
E-mail: {\tt gangwu76@126.com} and {\tt gangwu@cumt.edu.cn}. This author is
supported by the National Science Foundation of China under grant 11371176, the Natural Science Foundation of Jiangsu
Province under grant BK20131126, the 333 Project of Jiangsu Province, and the Talent Introduction Program of China
University of Mining and Technology.}
}
\begin{document}

\maketitle


\begin{abstract}
We are interested in computing a simple eigenpair $(\lambda,{\bf x})$ of a large non-Hermitian matrix $A$, by a general harmonic Rayleigh-Ritz projection method.
Given a search subspace $\mathcal{K}$ and a target point $\tau$, we focus on the convergence of the harmonic Ritz vector $\widetilde{\bf x}$ and harmonic Ritz value $\widetilde{\lambda}$.
In [{Z. Jia}, {\em The convergence of harmonic Ritz values, harmonic Ritz vectors,
and refined harmonic Ritz vectors}, Math. Comput., 74 (2004), pp. 1441--1456.], Jia showed that for the convergence of harmonic Ritz vector and harmonic Ritz value, it is essential to assume certain Rayleigh quotient matrix being {\it uniformly nonsingular} as $\angle({\bf x},\mathcal{K})\rightarrow 0$.
However, this assumption can not be guaranteed theoretically for a general matrix $A$, and the Rayleigh quotient matrix can be singular or near singular even if $\tau$ is not close to $\lambda$.
In this paper, we abolish this constraint and derive new bounds for the convergence of harmonic Rayleigh-Ritz projection methods. We show that as the distance between ${\bf x}$ and $\mathcal{K}$ tends to zero and $\tau$ is satisfied with the so-called {\it uniform separation condition}, the harmonic Ritz value converges, and the harmonic Ritz vector converges as $\frac{1}{\lambda-\tau}$ is well separated from other Ritz values of $(A-\tau I)^{-1}$ in the orthogonal complement of $(A-\tau I)\widetilde{\bf x}$
with respect to $(A-\tau I)\mathcal{K}$.
\end{abstract}

\begin{keywords}
 Rayleigh-Ritz projection method, Harmonic Rayleigh-Ritz projection method, Ritz vector, Ritz value, Harmonic Ritz vector, Harmonic Ritz value.
\end{keywords}

\begin{AMS}
65F15, 65F10.
\end{AMS}

\pagestyle{myheadings} \thispagestyle{plain} \markboth{GANG WU}{Convergence of harmonic Ritz vectors and harmonic Ritz values}

\section{Introduction}
\setcounter{equation}{0}

Rayleigh-Ritz projection methods are popular techniques for
solving a few eigenpairs of an $n$-by-$n$ large non-Hermitian matrix $A$ \cite{Bai,Saad,Stewart}. Given a search subspace $\mathcal{K}$ whose dimension $m\ll n$, the
Rayleigh-Ritz procedure seeks an approximation $(\widehat{\lambda},\widehat{\bf x})$ to the desired eigenpair $(\lambda,{\bf x})$, which satisfies the following Galerkin condition
\begin{equation}\label{eqn1.1}
 \bigg\{\begin{array}{c}
 \widehat{\bf x}\in \mathcal{K}\\
A\widehat{\bf x}-\widehat{\lambda}\widehat{\bf x}~\bot~\mathcal{K}.
 \end{array}
\end{equation}
Let $V_m$ be an orthonormal basis of $\mathcal{K}$, then \eqref{eqn1.1} reduces to solving a {\it standard} eigenproblem with respect to an $m$-by-$m$ matrix:
$V_m^{\rm H}AV_m{\bf p}=\widehat{\lambda}{\bf p}$.
Then $\widehat{\lambda}$ is called a {Ritz value} and $\widehat{\bf x}=V_m{\bf p}$ is called a {Ritz vector} of $A$ in the subspace $\mathcal{K}$.

It is well known that Rayleigh-Ritz projection methods are suitable for solving exterior eigenpairs \cite{Mor1,Paige}, however,
if some interior eigenpairs are desired, the harmonic Rayleigh-Ritz projection
methods are more appropriate \cite{Bai,Mor1,Mor2,Paige,Stewart}.
Suppose that we want to compute some eigenvalues near a target point $\tau\in\mathbb{C}$. Then these possible interior eigenvalues
of $A$ can be transformed into some exterior eigenvalues of the matrix $(A-\tau I)^{-1}$, and some standard Rayleigh-Ritz projection methods
working on $(A-\tau I)^{-1}$ can be utilized. However, factoring $A-\tau I$
is often very expensive and even impractical. The harmonic Rayleigh-Ritz projection method is primarily used to settle
this problem and to compute interior eigenvalues and eigenvectors of large matrices
without factoring $A-\tau I$ \cite{Fre,Mor1,Mor2,Paige}.

More precisely, the harmonic Rayleigh-Ritz procedure seeks a harmonic Ritz pair $(\widetilde{\lambda},\widetilde{\bf x})$ that satisfies the following Petrov-Galerkin condition \cite{Bai,Saad,Stewart}
\begin{equation}\label{eqn1.2}
 \bigg\{\begin{array}{c}
 \widetilde{\bf x}\in \mathcal{K}\\
A\widetilde{\bf x}-\widetilde{\lambda}\widetilde{\bf x}~\bot~(A-\tau I)\mathcal{K}.
 \end{array}
\end{equation}
Let $V_m$ be an orthonormal basis of $\mathcal{K}$, then \eqref{eqn1.2} reduces to an $m$-by-$m$ {\it generalized} eigenproblem
\begin{equation}\label{eqn1.3}
V_m^{\rm H}(A-\tau I)^{\rm H}(A-\tau I)V_m{\bf q}=(\widetilde{\lambda}-\tau)V_{m}^{\rm H}(A-\tau I)^{\rm H}V_m{\bf q},
\end{equation}
where $\widetilde{\lambda}$ is called a {harmonic Ritz value} and $\widetilde{\bf x}=V_m{\bf q}$ is called a {harmonic Ritz vector} of $A$ in the subspace $\mathcal{K}$.
Notice that $V_m^{\rm H}(A-\tau I)^{\rm H}(A-\tau I)V_m$ is Hermitian positive definite provided $\tau$ is not an eigenvalue of $A$, and thus the matrix pencil in \eqref{eqn1.3} is always regular \cite{GV}.
We remark that the harmonic projection is well defined and can
compute $m$ harmonic Ritz pairs only if the matrix pencil in \eqref{eqn1.3} is regular. Moreover, in this case, there might be infinite harmonic Ritz values.

In this paper, we are concerned with a simple eigenpair $(\lambda,{\bf x})$, and focus on the convergence of the harmonic Ritz pair $(\widetilde{\lambda},\widetilde{\bf x})$ as the distance between ${\bf x}$ and $\mathcal{K}$ approaches zero.
Denote
$$
B=V_m^{\rm H}(A-\tau I)^{\rm H}V_m,\quad C=V_m^{\rm H}(A-\tau I)^{\rm H}(A-\tau I)V_m,
$$
if $B$ is nonsingular, then \eqref{eqn1.3} can be rewritten as
$(B^{-1}C){\bf q}=(\widetilde{\lambda}-\tau){\bf q}$.
The matrix $B^{\rm H}$ is called the {\it Rayleigh quotient matrix} and $B^{-1}C$ the {\it harmonic
Rayleigh quotient matrix} of $A$ with respect to $\mathcal{K}$ and the target point $\tau$, respectively.
In \cite{Jia}, it was shown that the harmonic Ritz value $\widetilde{\lambda}$ converges to $\lambda$ if a certain Rayleigh
quotient matrix is {\it uniformly nonsingular}; and the harmonic Ritz vector $\widetilde{\bf x}$
converges to ${\bf x}$ if the Rayleigh quotient matrix is {\it uniformly nonsingular} and $\widetilde{\lambda}$
remains well separated from the other harmonic Ritz values; see also \cite{Chen}.
However, for a non-Hermitian matrix $A$, the assumption of the Rayleigh quotient matrix is {uniformly nonsingular} can not be guaranteed theoretically
\cite[Theorem 4.3]{Jia}.
Further, the Rayleigh quotient matrix can be singular or near singular even if $\tau$ is not close to $\lambda$, and the upper bounds given in \cite{Jia} can be overestimated when the Rayleigh quotient matrix is near singular. We will give an example to illustrate this phenomenon. Therefore, it is necessary to revisit the convergence of harmonic Ritz vectors and harmonic Ritz values.

Very recently, Vecharynski \cite{Vec} presented a Saad's type bound for harmonic Ritz vectors of
a Hermitian matrix $A$. It shows that, along with $\angle({\bf x},\mathcal{K})$, the closeness of the
harmonic Ritz vectors to the exact eigenvectors generally depends on the spectral condition number of $A-\tau I$.
In this paper, we abolish the constraint of uniform non-singularity of the Rayleigh
quotient matrix that required in \cite{Chen,Jia}, and generalize the main result of \cite{Vec} to the non-Hermitian case.
We prove that as $\angle({\bf x},\mathcal{K})\rightarrow 0$, the closeness of the
harmonic Ritz pair $(\widetilde{\lambda},\widetilde{\bf x})$ to the exact eigenpair $(\lambda,{\bf x})$, generally depends on the condition number of the shifted matrix $A-\tau I$, which is uniformly bounded as $\tau\neq\lambda$.
More precisely, we show that the convergence of the harmonic Ritz vector is closely related to the condition number $\kappa(A-\tau I)$, while the convergence of the harmonic Ritz value generally depends on the ratio ${\kappa(A-\tau I)}/{|\tau-\lambda|}$.
Note that the assumption of $\angle({\bf x},\mathcal{K})\rightarrow 0$ is not stringent in practice. Indeed, it is a necessary condition for {\it any} approximate eigenvectors in $\mathcal{K}$ to converge. Otherwise, {all} the approximate eigenvectors in $\mathcal{K}$ will not converge \cite{Jia,JS,JS2}.

Throughout this paper, we make the assumption that $\lambda\neq \tau$ and $A-\tau I$ is nonsingular, and $\widetilde{\lambda}$ is not an infinite eigenvalue of the matrix pencil $(C,B)$.
Moreover, we suppose that $\lambda$ is the {\it closet} eigenvalue to $\tau$, i.e.,
$$
|\lambda-\tau|=\min_{\lambda_i\in\Lambda(A)}|\lambda_i-\tau|,
$$
where $\Lambda(A)=\{\lambda_1,\lambda_2,\ldots,\lambda_n\}$ is the spectrum of $A$.
The eigenvector and the approximate eigenvectors are normalized to have
unit length. The norm $\|\cdot\|$ used is the Euclidean norm (or the 2-norm) of a vector or matrix, and $\angle({\bf x},{\bf y})$ represents the {acute} angle between the two vectors ${\bf x}$ and ${\bf y}$. The angle between a vector ${\bf x}$ and a subspace $\mathcal{K}$ is defined as $\angle({\bf x},\mathcal{K})=\min_{{\bf w}\in\mathcal{K},{\bf w}\neq {\bf 0}}\angle({\bf x},{\bf w})$, and the sine of the angle between a vector ${\bf x}$ and a subspace $\mathcal{K}$ is defined as \cite[pp.250]{Stewart}
$$
\sin\angle({\bf x},\mathcal{K})=\min_{{\bf y}\in\mathcal{K}}\|{\bf x}-{\bf y}\|.
$$
Hence, for an unit vector ${\bf x}$ and an arbitrary ${\bf y}\in \mathbb{C}^{n}$, we have
\begin{equation}\label{1.1}
\sin\angle({\bf x},{\bf y})=\min_{\alpha\in\mathbb{C}}\|{\bf x}-\alpha{\bf y}\|.
\end{equation}
Denote by the superscript ``H" the conjugate
transpose of a matrix or vector.
Let $\kappa(A)=\sigma_{\max}(A)/\sigma_{\min}(A)$ the condition number of $A$, where
$\sigma_{\min}(A)$ and $\sigma_{\max}(A)$ stand for the smallest and the largest singular value of $A$, respectively.

\section{Some related work}
\setcounter{equation}{0}
There are a lot of convergence results available on standard Rayleigh-Ritz projection methods; see \cite{JS,JS2,JS3,Saad,Stewart} and the references therein.
In \cite{Stewart1}, a general {\it a prior} bound that describes the approximation quality of Ritz vectors for non-Hermitian matrices was given by Stewart, which is a generalization of ``Saad's bound" \cite[Theorem 4.6]{Saad} for Hermitian matrices. We call it ``Stewart's bound" on Ritz vectors of general matrices.
\begin{theorem}{\rm(}Stewart's bound \cite{Stewart}{\rm)}\label{Thm1}
Assume that $(\widehat{\lambda},\widehat{\bf x})$ is a Ritz pair of the matrix $A$ to the subspace $\mathcal{K}$ and that the matrix $U$ having orthonormal columns span the orthogonal complement of $\widehat{\bf x}$ with respect to $\mathcal{K}$. Define $G=U^{\rm H}AU$ and ${\rm sep}(\lambda,G)=\min_{\|u\|=1}\|Gu-\lambda{\bf u}\|$. Then
\begin{equation}
\sin\angle({\bf x},\widehat{\bf x})\leq\sin\angle({\bf x},\mathcal{K})\sqrt{1+\frac{\gamma^2}{{\rm sep}(\lambda,G)^2}},
\end{equation}
where $\gamma=\|P_{\mathcal{K}}A(I-P_{\mathcal{K}})\|$ and $P_{\mathcal{K}}$ is the orthogonal projector on $\mathcal{K}$.
\end{theorem}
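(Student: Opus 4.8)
The plan is to reproduce, in a non‑Hermitian‑friendly form, the classical argument behind Saad's bound, using only the Galerkin condition \eqref{eqn1.1} together with orthogonality. Since $\widehat{\bf x}$ is normalized, the columns of $[\widehat{\bf x},\,U]$ form an orthonormal basis of $\mathcal{K}$; let $P_{\mathcal{K}}$ denote the orthogonal projector onto $\mathcal{K}$. First I would decompose the unit eigenvector in this basis,
\[
{\bf x}=\alpha\widehat{\bf x}+U{\bf t}+{\bf f},\qquad \alpha=\widehat{\bf x}^{\rm H}{\bf x}\in\mathbb{C},\quad {\bf t}=U^{\rm H}{\bf x}\in\mathbb{C}^{m-1},\quad {\bf f}=(I-P_{\mathcal{K}}){\bf x},
\]
where the three summands are mutually orthogonal because ${\rm range}(U)\subseteq\mathcal{K}$ and ${\bf f}\perp\mathcal{K}$. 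Then $\|{\bf f}\|=\sin\angle({\bf x},\mathcal{K})$, and projecting ${\bf x}$ onto ${\rm span}\{\widehat{\bf x}\}$ gives $\sin^2\angle({\bf x},\widehat{\bf x})=\|{\bf t}\|^2+\|{\bf f}\|^2$. Hence the whole theorem reduces to the single estimate $\|{\bf t}\|\le \bigl(\gamma/{\rm sep}(\lambda,G)\bigr)\,\|{\bf f}\|$, after which squaring, adding $\|{\bf f}\|^2$, and taking square roots yields the claim.

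To produce that estimate I would multiply the eigenvalue relation $A{\bf x}=\lambda{\bf x}$ on the left by $U^{\rm H}$. On the right this gives $\lambda U^{\rm H}{\bf x}=\lambda{\bf t}$, since $U^{\rm H}\widehat{\bf x}=0$ and $U^{\rm H}{\bf f}=0$. On the left I expand ${\bf x}$ into its three pieces: the term $\alpha\,U^{\rm H}A\widehat{\bf x}$ vanishes, because the Galerkin condition $(A-\widehat{\lambda}I)\widehat{\bf x}\perp\mathcal{K}$ forces $U^{\rm H}A\widehat{\bf x}=\widehat{\lambda}\,U^{\rm H}\widehat{\bf x}=0$; the middle term is $U^{\rm H}AU{\bf t}=G{\bf t}$; and the last is $U^{\rm H}A{\bf f}$. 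This gives the Sylvester‑type identity $(G-\lambda I){\bf t}=-\,U^{\rm H}A{\bf f}$. Now, using $U^{\rm H}=U^{\rm H}P_{\mathcal{K}}$ and $(I-P_{\mathcal{K}}){\bf f}={\bf f}$, I insert both projectors to write $\|U^{\rm H}A{\bf f}\|=\|U^{\rm H}P_{\mathcal{K}}A(I-P_{\mathcal{K}}){\bf f}\|\le\|P_{\mathcal{K}}A(I-P_{\mathcal{K}})\|\,\|{\bf f}\|=\gamma\|{\bf f}\|$. Combining with $\|(G-\lambda I)^{-1}\|=1/\sigma_{\min}(G-\lambda I)=1/{\rm sep}(\lambda,G)$ delivers $\|{\bf t}\|\le\bigl(\gamma/{\rm sep}(\lambda,G)\bigr)\|{\bf f}\|$, completing the argument.

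I expect the only genuinely delicate points to be structural rather than computational: (i) choosing the orthogonal splitting of $\mathcal{K}$ into $\widehat{\bf x}$ and ${\rm range}(U)$ so that the Galerkin condition kills the cross term $U^{\rm H}A\widehat{\bf x}$; and (ii) inserting the projectors $P_{\mathcal{K}}$ and $I-P_{\mathcal{K}}$ at exactly the right place, so that the bound features $\gamma=\|P_{\mathcal{K}}A(I-P_{\mathcal{K}})\|$ rather than the much coarser $\|A\|$. Everything else is bookkeeping with orthonormal bases. It is also worth noting that the statement tacitly requires $\lambda\notin\Lambda(G)$, so that ${\rm sep}(\lambda,G)>0$ and $(G-\lambda I)^{-1}$ exists; otherwise the right‑hand side is $+\infty$ and the inequality holds trivially.
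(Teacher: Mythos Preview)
Your argument is correct and is essentially the standard proof of this result. Note, however, that the paper does not supply its own proof of this theorem: it is quoted as Stewart's bound with a citation to \cite{Stewart,Stewart1} and is used later as a black box (e.g., in the proof of Theorem~\ref{Thm21}). So there is nothing in the paper to compare your proof against; what you have written is precisely the classical derivation, with the decomposition ${\bf x}=\alpha\widehat{\bf x}+U{\bf t}+{\bf f}$, the Galerkin condition killing the cross term $U^{\rm H}A\widehat{\bf x}$, the Sylvester identity $(G-\lambda I){\bf t}=-U^{\rm H}A{\bf f}$, and the projector insertion yielding the factor $\gamma=\|P_{\mathcal{K}}A(I-P_{\mathcal{K}})\|$.
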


Next we present some related work on the convergence of harmonic Ritz vectors and harmonic Ritz values.
Let $[{\bf q},~Q_{\bot}]$ be unitary, then
$$
\left[\begin{array}{ccc}
{\bf q}^{\rm H}\\
Q_{\bot}^{\rm H}\\
\end{array} \right]B^{-1}C\big[{\bf q},~Q_{\bot}]=\left[\begin{array}{ccc}
\widetilde{\lambda}-\tau& {\bf g}^{\rm H}\\
{\bf 0}&G_1\\
\end{array} \right],
$$
where ${\bf g}^{\rm H}={\bf q}^{\rm H}(B^{-1}C)Q_{\bot}$ and $G_1=Q_{\bot}^{\rm H}(B^{-1}C)Q_{\bot}$. Assume that $B$ is nonsingular and let $\tau=0$, Chen and Jia \cite{Chen} extended the
result of Theorem \ref{Thm1} to the harmonic Ritz vectors, and established the following {\it a priori} error bound
for $\sin\angle({\bf x},\widetilde{\bf x})$ in terms of $\sin\angle({\bf x},\mathcal{K})$.
\begin{theorem}\label{Thm1.2}
Assume that $B$ is nonsingular and let $\tau=0$. Let $(\widetilde{\lambda},\widetilde{\bf x})$ be a harmonic Ritz pair with respect to the subspace $\mathcal{K}$. Then
\begin{equation}
\sin\angle({\bf x},\widetilde{\bf x})\leq\sin\angle({\bf x},\mathcal{K})\sqrt{1+\frac{\gamma_1^2\|B^{-1}\|^2}{{\rm sep}(\lambda,G_1)^2}},
\end{equation}
where $\gamma_1=\|P_{\mathcal{K}}A^{\rm H}(\lambda I-A)(I-P_{\mathcal{K}})\|$.
\end{theorem}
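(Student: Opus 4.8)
The plan is to imitate Stewart's proof of Theorem~\ref{Thm1}, but to run the argument in the coordinate space of $\mathcal{K}$. The point is that for $\tau=0$ the Petrov--Galerkin condition \eqref{eqn1.2} applied to $\widetilde{\bf x}=V_m{\bf q}$ reads $V_m^{\rm H}A^{\rm H}(A-\widetilde{\lambda} I)\widetilde{\bf x}={\bf 0}$, i.e.\ $(B^{-1}C){\bf q}=\widetilde{\lambda}{\bf q}$ after left-multiplying by $B^{-1}$; this is exactly what makes the block quantity $G_1$ displayed before the theorem available. If ${\rm sep}(\lambda,G_1)=0$ the claimed bound equals $+\infty$ and there is nothing to prove, so one may assume ${\rm sep}(\lambda,G_1)>0$, whence $G_1-\lambda I$ is nonsingular with $\|(G_1-\lambda I)^{-1}\|={\rm sep}(\lambda,G_1)^{-1}$.

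First I would fix notation. With $[{\bf q},~Q_{\bot}]$ the unitary matrix from the block triangularization preceding the theorem, put $U=V_mQ_{\bot}$, so $[\widetilde{\bf x},~U]$ has orthonormal columns, spans $\mathcal{K}$, and $G_1=Q_{\bot}^{\rm H}(B^{-1}C)Q_{\bot}$. Let ${\bf p}=V_m^{\rm H}{\bf x}$ be the coordinate vector of $P_{\mathcal{K}}{\bf x}$ and ${\bf r}=(I-P_{\mathcal{K}}){\bf x}$, so that ${\bf x}=V_m{\bf p}+{\bf r}$ with ${\bf r}\perp\mathcal{K}$ and $\|{\bf r}\|=\sin\angle({\bf x},\mathcal{K})$. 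Writing ${\bf p}=\widetilde{\alpha}{\bf q}+Q_{\bot}{\bf z}$ gives $V_m{\bf p}=\widetilde{\alpha}\widetilde{\bf x}+U{\bf z}$, and hence, by \eqref{1.1} with the choice $\beta=\widetilde{\alpha}$ together with the orthogonality $U{\bf z}\in\mathcal{K}\perp{\bf r}$ and $\|U{\bf z}\|=\|{\bf z}\|$,
\[
\sin\angle({\bf x},\widetilde{\bf x})\le\|{\bf x}-\widetilde{\alpha}\widetilde{\bf x}\|=\|U{\bf z}+{\bf r}\|=\big(\|{\bf z}\|^2+\sin^2\angle({\bf x},\mathcal{K})\big)^{1/2}.
\]
So the whole theorem reduces to the estimate $\|{\bf z}\|\le\gamma_1\|B^{-1}\|\sin\angle({\bf x},\mathcal{K})/{\rm sep}(\lambda,G_1)$.

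Next I would derive a Sylvester-type identity for ${\bf z}$. From $A{\bf x}=\lambda{\bf x}$ one gets $A^{\rm H}A{\bf x}=\lambda A^{\rm H}{\bf x}$; left-multiplying by $V_m^{\rm H}$ and substituting ${\bf x}=V_m{\bf p}+{\bf r}$ yields $(C-\lambda B){\bf p}=-V_m^{\rm H}A^{\rm H}(A-\lambda I){\bf r}$, and since $B$ is nonsingular, $(B^{-1}C-\lambda I){\bf p}=-B^{-1}V_m^{\rm H}A^{\rm H}(A-\lambda I){\bf r}$. Now left-multiply by $Q_{\bot}^{\rm H}$: the term $\widetilde{\alpha}\,Q_{\bot}^{\rm H}(B^{-1}C-\lambda I){\bf q}$ vanishes because $(B^{-1}C){\bf q}=\widetilde{\lambda}{\bf q}$ and $Q_{\bot}^{\rm H}{\bf q}={\bf 0}$ (this is where the harmonic Ritz pair relation is used), while $Q_{\bot}^{\rm H}(B^{-1}C-\lambda I)Q_{\bot}=G_1-\lambda I$, so
\[
(G_1-\lambda I){\bf z}=-Q_{\bot}^{\rm H}B^{-1}V_m^{\rm H}A^{\rm H}(A-\lambda I){\bf r}.
\]
Taking norms and using $\|Q_{\bot}^{\rm H}\|\le1$ gives $\|{\bf z}\|\le{\rm sep}(\lambda,G_1)^{-1}\|B^{-1}\|\,\|V_m^{\rm H}A^{\rm H}(A-\lambda I){\bf r}\|$. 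For the last factor, the identities $V_m^{\rm H}=V_m^{\rm H}P_{\mathcal{K}}$ and ${\bf r}=(I-P_{\mathcal{K}}){\bf r}$ let me insert projectors, $V_m^{\rm H}A^{\rm H}(A-\lambda I){\bf r}=V_m^{\rm H}P_{\mathcal{K}}A^{\rm H}(A-\lambda I)(I-P_{\mathcal{K}}){\bf r}$, so that $\|V_m^{\rm H}A^{\rm H}(A-\lambda I){\bf r}\|\le\|P_{\mathcal{K}}A^{\rm H}(A-\lambda I)(I-P_{\mathcal{K}})\|\,\|{\bf r}\|=\gamma_1\sin\angle({\bf x},\mathcal{K})$, using that $\|P_{\mathcal{K}}A^{\rm H}(A-\lambda I)(I-P_{\mathcal{K}})\|=\|P_{\mathcal{K}}A^{\rm H}(\lambda I-A)(I-P_{\mathcal{K}})\|=\gamma_1$. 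Combined with the reduction above, this proves the theorem.

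I do not expect a genuine obstacle: once one commits to working in coordinates the proof is essentially bookkeeping. The two points that need care are the invertibility of $G_1-\lambda I$, which holds exactly in the regime ${\rm sep}(\lambda,G_1)>0$ in which the bound asserts anything, and where the standing hypothesis that $B$ is nonsingular (so that $\widetilde{\lambda}$ is finite and $B^{-1}C$ is well defined) is used; and the meaning of the factor $\|B^{-1}\|$, absent from Stewart's bound, which is the price of passing through the harmonic Rayleigh quotient matrix $B^{-1}C$ rather than a genuine Rayleigh quotient $U^{\rm H}AU$, while the occurrence of $A^{\rm H}(A-\lambda I)$ inside $\gamma_1$ simply records that the harmonic residual is orthogonal to $A\mathcal{K}$ rather than to $\mathcal{K}$.
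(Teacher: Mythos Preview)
The paper does not supply its own proof of this theorem: it is quoted in the ``Some related work'' section as a result of Chen and Jia \cite{Chen}, and the paper's contributions begin in Section~3. So there is no in-paper argument to compare against.

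Your proof is correct and is precisely the adaptation of Stewart's argument that one expects (and that the original reference carries out). The key steps --- orthogonally decomposing $P_{\mathcal{K}}{\bf x}$ along $\widetilde{\bf x}$ and its complement $U=V_mQ_\bot$, obtaining the Sylvester-type identity $(G_1-\lambda I){\bf z}=-Q_\bot^{\rm H}B^{-1}V_m^{\rm H}A^{\rm H}(A-\lambda I){\bf r}$ from the eigen-relation $A^{\rm H}A{\bf x}=\lambda A^{\rm H}{\bf x}$, and then reading off $\|{\bf z}\|\le\gamma_1\|B^{-1}\|\sin\angle({\bf x},\mathcal{K})/{\rm sep}(\lambda,G_1)$ --- are all sound. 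One small sharpening: in step~9 you have in fact the equality $\|V_m^{\rm H}{\bf v}\|=\|P_{\mathcal{K}}{\bf v}\|$ for any ${\bf v}$, so the inequality $\|V_m^{\rm H}A^{\rm H}(A-\lambda I){\bf r}\|\le\gamma_1\|{\bf r}\|$ comes directly without inserting $\|V_m^{\rm H}\|\le1$ as a separate factor; this is cosmetic, but it makes clear that no slack is introduced at that point.
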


In \cite{Jia}, Jia gave insight into the convergence of harmonic Ritz vectors and harmonic Ritz values for the harmonic Rayleigh-Ritz projection methods in a unified manner, and pointed out that both $\widetilde{\lambda}$ and $\widetilde{\bf x}$ converge to $\lambda$ and ${\bf x}$ only {\it conditionally} as $\sin\angle({\bf x},\mathcal{K})\rightarrow 0$.
The following theorem gives the convergence of harmonic Ritz vectors.
\begin{theorem}\cite[Theorem 3.2]{Jia}\label{Thm1.3}
Let $\epsilon=\sin\angle({\bf x},\mathcal{K})$, $\tau=0$, and assume that $B$ is nonsingular. If ${\rm sep}(\lambda,G_1)>0$, then
\begin{equation}\label{23}
\sin\angle({\bf x},\widetilde{\bf x})\leq\left(1+\frac{2\|B^{-1}\|\|A\|^2}{\sqrt{1-\epsilon^2}~{\rm sep}(\lambda,G_1)}\right)\epsilon.
\end{equation}
\end{theorem}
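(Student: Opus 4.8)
The plan is to transplant the classical Stewart/Saad decomposition argument (Theorem \ref{Thm1}) onto the block structure of the harmonic eigenproblem \eqref{eqn1.3}. Since $\tau=0$ we have $A-\tau I=A$, and because $B$ is nonsingular the pencil reduces to $(B^{-1}C){\bf q}=\widetilde{\lambda}{\bf q}$ with $\widetilde{\bf x}=V_m{\bf q}$, $\|{\bf q}\|=1$. We may assume $\epsilon<1$, otherwise \eqref{23} is vacuous. Put $\theta=\angle({\bf x},\mathcal{K})$, so $\sin\theta=\epsilon$, and split
\[
{\bf x}=\cos\theta\,{\bf u}+\sin\theta\,{\bf e},\qquad {\bf u}=\frac{P_{\mathcal{K}}{\bf x}}{\cos\theta}\in\mathcal{K},\quad {\bf e}=\frac{(I-P_{\mathcal{K}}){\bf x}}{\sin\theta}\perp\mathcal{K},
\]
both of unit norm. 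Since ${\bf u}\in\mathcal{K}$, write ${\bf u}=V_m{\bf y}$ with $\|{\bf y}\|=1$, and resolve ${\bf y}$ along the unitary matrix $[{\bf q},\,Q_{\bot}]$: ${\bf y}=\alpha{\bf q}+Q_{\bot}{\bf w}$ with $|\alpha|^2+\|{\bf w}\|^2=1$, so that ${\bf u}=\alpha\widetilde{\bf x}+V_mQ_{\bot}{\bf w}$ where $V_mQ_{\bot}{\bf w}$ lies in $\mathcal{K}$ and is orthogonal to $\widetilde{\bf x}$.

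The core of the proof is to bound $\|{\bf w}\|$. Left-multiplying $A{\bf x}=\lambda{\bf x}$ by $V_m^{\rm H}A^{\rm H}$ and inserting the splitting of ${\bf x}$ gives $\cos\theta\,(C-\lambda B){\bf y}=\sin\theta\,V_m^{\rm H}A^{\rm H}(\lambda I-A){\bf e}$; dividing by $\cos\theta\neq0$ and multiplying by $B^{-1}$,
\[
(B^{-1}C-\lambda I){\bf y}=\tan\theta\,B^{-1}V_m^{\rm H}A^{\rm H}(\lambda I-A){\bf e}.
\]
Substituting ${\bf y}=\alpha{\bf q}+Q_{\bot}{\bf w}$ and using $(B^{-1}C){\bf q}=\widetilde{\lambda}{\bf q}$ together with $(B^{-1}C)Q_{\bot}={\bf q}{\bf g}^{\rm H}+Q_{\bot}G_1$ (the block form displayed before Theorem \ref{Thm1.2} with $\tau=0$), the left-hand side equals $\alpha(\widetilde{\lambda}-\lambda){\bf q}+{\bf q}({\bf g}^{\rm H}{\bf w})+Q_{\bot}(G_1-\lambda I){\bf w}$. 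Left-multiplying by $Q_{\bot}^{\rm H}$ annihilates the two terms lying along ${\bf q}$ and yields $(G_1-\lambda I){\bf w}=\tan\theta\,Q_{\bot}^{\rm H}B^{-1}V_m^{\rm H}A^{\rm H}(\lambda I-A){\bf e}$. Since ${\rm sep}(\lambda,G_1)=\sigma_{\min}(G_1-\lambda I)>0$, taking norms gives ${\rm sep}(\lambda,G_1)\|{\bf w}\|\leq\tan\theta\,\|B^{-1}\|\,\|A\|\,\|\lambda I-A\|$, and because $|\lambda|\leq\|A\|$ (so $\|\lambda I-A\|\leq 2\|A\|$) we conclude $\|{\bf w}\|\leq\frac{2\|B^{-1}\|\|A\|^2}{{\rm sep}(\lambda,G_1)}\tan\theta$.

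It remains to convert this into an angle. Choosing $\beta=\alpha\cos\theta$ we have ${\bf x}-\beta\widetilde{\bf x}=\cos\theta\,V_mQ_{\bot}{\bf w}+\sin\theta\,{\bf e}$, and since $V_mQ_{\bot}{\bf w}\in\mathcal{K}$ is orthogonal to ${\bf e}$ and $V_mQ_{\bot}$ has orthonormal columns,
\[
\sin^2\angle({\bf x},\widetilde{\bf x})\leq\|{\bf x}-\beta\widetilde{\bf x}\|^2=\cos^2\theta\,\|{\bf w}\|^2+\sin^2\theta\leq\|{\bf w}\|^2+\epsilon^2.
\]
Hence $\sin\angle({\bf x},\widetilde{\bf x})\leq\|{\bf w}\|+\epsilon$, and inserting the bound on $\|{\bf w}\|$ with $\tan\theta=\epsilon/\sqrt{1-\epsilon^2}$ produces exactly \eqref{23}. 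The one delicate point is the derivation of the projected residual equation $(G_1-\lambda I){\bf w}=\tan\theta\,Q_{\bot}^{\rm H}B^{-1}V_m^{\rm H}A^{\rm H}(\lambda I-A){\bf e}$: because the factor $B^{-1}$ couples the ${\bf q}$- and $Q_{\bot}$-components of ${\bf y}$, one must verify that after projecting with $Q_{\bot}^{\rm H}$ it is precisely the block $G_1$ — and not some other part of $B^{-1}C$ — that governs $\|{\bf w}\|$; once that identity is in place, the remaining estimates are routine submultiplicativity.
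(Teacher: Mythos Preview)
The paper does not supply its own proof of Theorem~\ref{Thm1.3}; the result is quoted from Jia~\cite{Jia} as background, so there is no in-paper argument to compare against. That said, your proof is correct and is exactly the standard route one would expect (and essentially the one in \cite{Chen,Jia}): transplant the Stewart decomposition onto the harmonic Rayleigh quotient $B^{-1}C$, project $(B^{-1}C-\lambda I){\bf y}$ onto $Q_{\bot}$ to isolate $(G_1-\lambda I){\bf w}$, bound $\|{\bf w}\|$ through ${\rm sep}(\lambda,G_1)$, and conclude via $\sin\angle({\bf x},\widetilde{\bf x})\le\sqrt{\|{\bf w}\|^2+\epsilon^2}\le\|{\bf w}\|+\epsilon$.

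Every step checks. The residual identity $\cos\theta\,(C-\lambda B){\bf y}=\sin\theta\,V_m^{\rm H}A^{\rm H}(\lambda I-A){\bf e}$ follows from $A{\bf x}=\lambda{\bf x}$ after left-multiplying by $V_m^{\rm H}A^{\rm H}$ and inserting the splitting of ${\bf x}$. The ``delicate point'' you flag---that $Q_{\bot}^{\rm H}(B^{-1}C-\lambda I)Q_{\bot}=G_1-\lambda I$---is in fact immediate from the block display preceding Theorem~\ref{Thm1.2}, since $[{\bf q},Q_{\bot}]$ is unitary and that display reads $B^{-1}C\,Q_{\bot}={\bf q}{\bf g}^{\rm H}+Q_{\bot}G_1$. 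The estimate $\|\lambda I-A\|\le|\lambda|+\|A\|\le 2\|A\|$ is what produces the factor $2\|A\|^2$, and the orthogonality of $V_mQ_{\bot}{\bf w}\in\mathcal{K}$ to ${\bf e}\perp\mathcal{K}$ justifies the Pythagorean step at the end.
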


In particular, we have the following corollary. It indicates that the convergence of harmonic Ritz vectors is closely related to the condition number $\kappa(A)$ {\big(}or $\kappa(A-\tau I)$ if $\tau\neq 0${\big)} when $A$ {\big(}or $A-\tau I${\big)} is Hermitian positive or Hermitian negative \cite{Jia}.
\begin{cor}
Let $\epsilon=\sin\angle({\bf x},\mathcal{K})$ and $\tau=0$. Assume that $A$ is Hermitian positive or Hermitian negative. If ${\rm sep}(\lambda,G_1)>0$, then
$$
\sin\angle({\bf x},\widetilde{\bf x})\leq\left(1+\frac{2\kappa(A)\|A\|}{\sqrt{1-\epsilon^2}~{\rm sep}(\lambda,G_1)}\right)\epsilon.
$$
\end{cor}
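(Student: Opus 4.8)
The plan is to obtain this as an immediate consequence of Theorem~\ref{Thm1.3}, the only work being to replace the factor $\|B^{-1}\|\,\|A\|^2$ appearing there by $\kappa(A)\|A\|$. Since $\kappa(A)=\|A\|\,\|A^{-1}\|=\sigma_{\max}(A)/\sigma_{\min}(A)$, it suffices to establish the single inequality $\|B^{-1}\|\le\|A^{-1}\|$ under the stated definiteness hypothesis, and then multiply through by $\|A\|^2$.

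First I would note that with $\tau=0$ and $A$ Hermitian, the Rayleigh quotient matrix is $B=V_m^{\rm H}(A-\tau I)^{\rm H}V_m=V_m^{\rm H}A^{\rm H}V_m=V_m^{\rm H}AV_m$, i.e.\ $B$ is the orthogonal compression of $A$ onto $\mathcal{K}$; in particular $B$ is Hermitian and inherits the definiteness (positive or negative) of $A$. Consequently $\sigma_{\min}(B)=\min_i|\mu_i|$, where the $\mu_i$ are the eigenvalues of $B$, and likewise $\sigma_{\min}(A)=\min_i|\lambda_i|$.

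Next I would invoke Cauchy's interlacing theorem for the compression $B=V_m^{\rm H}AV_m$: ordering the eigenvalues of $A$ as $\lambda_1\ge\cdots\ge\lambda_n$ and those of $B$ as $\mu_1\ge\cdots\ge\mu_m$, one has $\lambda_i\ge\mu_i\ge\lambda_{i+n-m}$ for all $i$. When $A$ is positive definite this gives $\mu_m\ge\lambda_n>0$, so $\sigma_{\min}(B)=\mu_m\ge\lambda_n=\sigma_{\min}(A)$; when $A$ is negative definite it gives $\lambda_1\ge\mu_1$ with both negative, so $\sigma_{\min}(B)=-\mu_1\ge-\lambda_1=\sigma_{\min}(A)$. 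In either case $\sigma_{\min}(B)\ge\sigma_{\min}(A)$, hence $\|B^{-1}\|=1/\sigma_{\min}(B)\le1/\sigma_{\min}(A)=\|A^{-1}\|$. (Alternatively, the same bound drops out of the Courant--Fischer min--max characterization of $\sigma_{\min}$ applied to the quadratic form ${\bf p}^{\rm H}B{\bf p}={\bf w}^{\rm H}A{\bf w}$ with ${\bf w}=V_m{\bf p}$, without explicitly naming the interlacing theorem.) Multiplying by $\|A\|^2$ yields $\|B^{-1}\|\,\|A\|^2\le\|A^{-1}\|\,\|A\|\cdot\|A\|=\kappa(A)\|A\|$, and inserting this into the inequality of Theorem~\ref{Thm1.3} reproduces exactly the asserted bound.

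This argument presents no genuine obstacle; it is essentially a one-line specialization once the compression structure of $B$ is recognized. The only point that demands a little care is the sign bookkeeping in the negative-definite case, where the smallest singular value of a negative-definite Hermitian matrix corresponds to its \emph{largest} (least negative) eigenvalue, so one must check that the relevant end of the interlacing chain—here $\lambda_1\ge\mu_1$ rather than $\mu_m\ge\lambda_n$—is the one being used.
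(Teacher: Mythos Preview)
Your proposal is correct and matches the paper's intended derivation: the corollary is presented in the paper without proof as an immediate specialization of Theorem~\ref{Thm1.3} (with attribution to \cite{Jia}), and the only missing ingredient is precisely the bound $\|B^{-1}\|\le\|A^{-1}\|$ under the definiteness hypothesis, which you supply via interlacing (equivalently, the Rayleigh-quotient argument ${\bf p}^{\rm H}B{\bf p}=(V_m{\bf p})^{\rm H}A(V_m{\bf p})$). Your sign bookkeeping in the negative-definite case is also correct.
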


A combination of the following two theorems sheds light on the convergence of harmonic Ritz values.
\begin{theorem}\cite[Theorem 2.1]{Jia}\label{Thm1.4}
Let $B^{-1}C$ be the harmonic Rayleigh quotient matrix, let $(\lambda,{\bf x})$ be the desired eigenpair of $A$ with $\|{\bf x}\|=1$ and let $\epsilon=\sin\angle({\bf x},\mathcal{K})$, $\tau=0$. Then there is a matrix $E$ satisfying
\begin{equation}\label{eqn17}
\|E\|\leq\frac{\epsilon}{\sqrt{1-\epsilon^2}}\|B^{-1}\|(|\lambda|\|A\|+\|A\|^2)
\end{equation}
and such that $\lambda$ is an eigenvaule of $B^{-1}C+E$.
\end{theorem}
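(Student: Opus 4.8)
The plan is to exhibit $E$ explicitly as a rank-one matrix built from the residual produced by the best approximation to ${\bf x}$ inside $\mathcal{K}$; the statement is then a one-step backward-error assertion. First I would split the eigenvector orthogonally with respect to $\mathcal{K}$, writing ${\bf x}={\bf x}_1+{\bf x}_2$ with ${\bf x}_1=P_{\mathcal{K}}{\bf x}\in\mathcal{K}$ and ${\bf x}_2=(I-P_{\mathcal{K}}){\bf x}\perp\mathcal{K}$, so that $\|{\bf x}_2\|=\sin\angle({\bf x},\mathcal{K})=\epsilon$ and $\|{\bf x}_1\|=\cos\angle({\bf x},\mathcal{K})=\sqrt{1-\epsilon^2}$. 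Since ${\bf x}_1\in\mathcal{K}$, there is a vector ${\bf p}\in\mathbb{C}^m$, nonzero because $\epsilon<1$, with ${\bf x}_1=V_m{\bf p}$ and $\|{\bf p}\|=\|{\bf x}_1\|=\sqrt{1-\epsilon^2}$.

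Next I would push the eigen-equation through the harmonic projection. From $A{\bf x}=\lambda{\bf x}$ and $\tau=0$ (so $A-\tau I=A$) we obtain $A{\bf x}_1-\lambda{\bf x}_1=(\lambda I-A){\bf x}_2$. Multiplying on the left by $V_m^{\rm H}A^{\rm H}$ and using $B=V_m^{\rm H}A^{\rm H}V_m$ and $C=V_m^{\rm H}A^{\rm H}AV_m$ gives $C{\bf p}-\lambda B{\bf p}=V_m^{\rm H}A^{\rm H}(\lambda I-A){\bf x}_2$, hence, since $B$ is nonsingular,
\[
(B^{-1}C-\lambda I){\bf p}={\bf r},\qquad {\bf r}:=B^{-1}V_m^{\rm H}A^{\rm H}(\lambda I-A){\bf x}_2 .
\]
Then I would set $E:=-\,{\bf r}{\bf p}^{\rm H}/\|{\bf p}\|^2$, which satisfies $E{\bf p}=-{\bf r}$; consequently $(B^{-1}C+E){\bf p}=\lambda{\bf p}$ with ${\bf p}\neq{\bf 0}$, so $\lambda$ is an eigenvalue of $B^{-1}C+E$.

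It remains to bound $\|E\|$. Because $E$ has rank one, $\|E\|=\|{\bf r}\|/\|{\bf p}\|$, so I would estimate $\|{\bf r}\|$ by submultiplicativity, using $\|V_m^{\rm H}\|=1$ (orthonormal columns), $\|A^{\rm H}\|=\|A\|$, and the triangle inequality $\|(\lambda I-A){\bf x}_2\|\leq(|\lambda|+\|A\|)\|{\bf x}_2\|=(|\lambda|+\|A\|)\epsilon$. This gives $\|{\bf r}\|\leq\|B^{-1}\|\,\|A\|(|\lambda|+\|A\|)\epsilon=\|B^{-1}\|(|\lambda|\|A\|+\|A\|^2)\epsilon$, and dividing by $\|{\bf p}\|=\sqrt{1-\epsilon^2}$ produces exactly the asserted bound.

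Since this is a direct construction, I do not expect a real obstacle; the only points that need care are the normalization of the rank-one correction (chosen so that $E{\bf p}=-{\bf r}$ holds exactly and $\|E\|$ equals $\|{\bf r}\|/\|{\bf p}\|$) and the requirement $\epsilon<1$, which is what makes ${\bf p}$ nonzero and the right-hand side finite. I would also stress what the bound does \emph{not} say: the factor $\|B^{-1}\|$ is still present, and it can be large even when $\tau$ is far from $\lambda$; eliminating precisely this dependence on the Rayleigh quotient matrix is the motivation for the remainder of the paper.
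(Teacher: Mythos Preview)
Your proof is correct. Note, however, that the paper does not supply its own proof of this statement: Theorem~\ref{Thm1.4} is quoted verbatim from \cite[Theorem~2.1]{Jia} as background, so there is nothing in the present paper to compare against. That said, your argument is precisely the standard backward-error construction used in \cite{Jia} (and in the closely related \cite{JS} for ordinary Ritz values): orthogonally split ${\bf x}$ along $\mathcal{K}$, push the eigen-equation through the harmonic projection to obtain a residual equation $(B^{-1}C-\lambda I){\bf p}={\bf r}$, and absorb ${\bf r}$ into a rank-one correction $E=-{\bf r}{\bf p}^{\rm H}/\|{\bf p}\|^2$. Your norm estimates and the final bound are exactly those of the cited result, and your closing remark about the lingering $\|B^{-1}\|$ factor correctly identifies the issue that motivates the new analysis in Sections~3--4.
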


\begin{theorem}\cite[Corollary 2.2]{Jia}\label{Thm1.5}
There is an eigenvalue $\mu$ of $B^{-1}C$ such that
\begin{equation}
|\lambda-\mu|\leq(2\|A\|+\|E\|)^{1-\frac{1}{m}}\|E\|^{\frac{1}{m}},
\end{equation}
where $m$ is the order of $B^{-1}C$.
\end{theorem}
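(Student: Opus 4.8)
The plan is to derive this from Theorem~\ref{Thm1.4} by a classical, Elsner-type eigenvalue perturbation argument: Theorem~\ref{Thm1.4} exhibits $\lambda$ as an eigenvalue of the $m$-by-$m$ matrix $B^{-1}C+E$, and $B^{-1}C$ is obtained from $B^{-1}C+E$ by subtracting the small term $E$, so it suffices to show that such an additive perturbation cannot push every eigenvalue far away. One could quote a one-sided Elsner bound from the perturbation literature; instead I would re-derive it directly, which keeps the constant in the stated form.

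For the derivation, let ${\bf v}\in\mathbb{C}^{m}$ with $\|{\bf v}\|=1$ be an eigenvector of $B^{-1}C+E$ belonging to $\lambda$. Then
\[
(\lambda I-B^{-1}C){\bf v}=\bigl(\lambda I-(B^{-1}C+E)\bigr){\bf v}+E{\bf v}=E{\bf v},
\]
so $\|(\lambda I-B^{-1}C){\bf v}\|\le\|E\|$. Extend ${\bf v}$ to an orthonormal basis ${\bf v},{\bf v}_{2},\dots,{\bf v}_{m}$ of $\mathbb{C}^{m}$ and apply Hadamard's inequality to the columns of $(\lambda I-B^{-1}C)[{\bf v},{\bf v}_{2},\dots,{\bf v}_{m}]$, noting $|\det[{\bf v},{\bf v}_{2},\dots,{\bf v}_{m}]|=1$; this gives
\[
|\det(\lambda I-B^{-1}C)|\le\|(\lambda I-B^{-1}C){\bf v}\|\cdot\|\lambda I-B^{-1}C\|^{m-1}\le\|E\|\,\|\lambda I-B^{-1}C\|^{m-1}.
\]
Since $\det(\lambda I-B^{-1}C)=\prod_{i=1}^{m}(\lambda-\mu_{i})$ for the eigenvalues $\mu_{1},\dots,\mu_{m}$ of $B^{-1}C$, taking $\mu$ to be one of them nearest to $\lambda$ yields $|\lambda-\mu|^{m}\le\prod_{i}|\lambda-\mu_{i}|\le\|E\|\,\|\lambda I-B^{-1}C\|^{m-1}$, i.e.
\[
|\lambda-\mu|\le\|\lambda I-B^{-1}C\|^{1-\frac{1}{m}}\,\|E\|^{\frac{1}{m}}.
\]

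It remains to bound $\|\lambda I-B^{-1}C\|$ by $2\|A\|+\|E\|$, and I expect this to be the main obstacle. Writing $\lambda I-B^{-1}C=\bigl(\lambda I-(B^{-1}C+E)\bigr)+E$ gives $\|\lambda I-B^{-1}C\|\le|\lambda|+\|B^{-1}C+E\|+\|E\|$; since $\lambda\in\Lambda(A)$ we have $|\lambda|\le\|A\|$, so the estimate reduces to $\|B^{-1}C+E\|\le\|A\|$ — equivalently, up to the harmless term $\|E\|$, a bound $\|B^{-1}C\|\le\|A\|$ on the harmonic Rayleigh quotient matrix. As the rest of the paper emphasizes, such a bound cannot be taken for granted for a general non-Hermitian $A$: in general $\|B^{-1}C\|$ is only controlled by a quantity such as $\|B^{-1}\|\,\|A\|^{2}$, and without extra hypotheses one must replace $\|A\|$ in the final bound by such a quantity. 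Obtaining the clean constant $2\|A\|$ therefore presumably rests on the particular way $E$ is constructed in the proof of Theorem~\ref{Thm1.4}, arranged so that $\|B^{-1}C+E\|\le\|A\|$. Granting any such norm bound, one has $\|\lambda I-B^{-1}C\|\le 2\|A\|+\|E\|$, and inserting this into the last display gives $|\lambda-\mu|\le(2\|A\|+\|E\|)^{1-\frac{1}{m}}\|E\|^{\frac{1}{m}}$, as asserted.
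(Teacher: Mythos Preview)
Your overall approach---an Elsner-type perturbation argument applied to the pair $B^{-1}C$ and $B^{-1}C+E$---is exactly what the paper has in mind. The paper does not prove Theorem~\ref{Thm1.5} at all; it is quoted verbatim from Jia's article. But the paper does state Elsner's theorem explicitly as \eqref{eqn3.5} and uses it, in precisely the way you outline, to prove its own analogous result, Corollary~\ref{Cor3.1}. Your re-derivation of the one-sided Elsner bound via Hadamard's inequality is correct and in fact gives the slightly sharper intermediate constant $\|\lambda I-B^{-1}C\|$ rather than $\|B^{-1}C+E\|+\|B^{-1}C\|$.

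Your hesitation about the final constant is well placed, and you have put your finger on the genuine issue. To reach $2\|A\|+\|E\|$ via Elsner one needs $\|B^{-1}C\|\le\|A\|$ (or $\|B^{-1}C+E\|\le\|A\|$), and neither holds for a general non-Hermitian $A$: the harmonic Ritz values are not bounded by $\|A\|$, and the entire point of the present paper is that $\|B^{-1}\|$---hence $\|B^{-1}C\|$---can blow up. The reason the same argument goes through cleanly in Corollary~\ref{Cor3.1} is that there the role of $B^{-1}C$ is played by $D=W_m^{\rm H}(A-\tau I)^{-1}W_m$ with $W_m$ orthonormal, so $\|D\|\le\|(A-\tau I)^{-1}\|$ is automatic. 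For Theorem~\ref{Thm1.5} no such compression bound is available from the definitions alone; as you correctly surmise, the constant $2\|A\|$ must come from the particular construction of $E$ in Jia's proof of Theorem~\ref{Thm1.4}, which is not reproduced here. So your proof is complete modulo that single externally supplied norm bound, and your diagnosis of where it must come from is accurate.
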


All the results given in Theorems \ref{Thm1.2}--\ref{Thm1.5} require that $\|B^{-1}\|$ is {\it uniformly bounded} (or $B$ is {\it uniformly nonsingular}) as $\sin\angle({\bf x},\mathcal{K})\rightarrow 0$, which can not be guaranteed theoretically \cite[Theorem 4.3]{Jia}. For instance, for interior eigenvalues of an indefinite (shifted) $A$, this condition might never be satisfied \cite[pp.104]{Voe}.
In practice, the matrix $B$ can be singular or near singular. Even if $B$ is not singular exactly, near singularity is bad enough \cite{Jia}.
Recently, Vecharynski established a Saad's type bound for harmonic Ritz vectors of a Hermitian matrix \cite{Vec}.
The bound shows a dependence of the harmonic Rayleigh-Ritz procedure
on the spectral condition number of $A-\tau I$. Let's discuss it in more detail.
First, a two-sided bound on $\sin\angle({\bf x},A{\bf y})$ in terms of $\sin\angle({\bf x},{\bf y})$ was investigated when $A$ is Hermitian, where ${\bf y}\in\mathbb{C}^n$ is an arbitrary nonzero vector. It is crucial for deriving the main result of \cite[pp.4]{Vec}.
\begin{theorem}\cite[Lemma 1]{Vec}\label{Thm2.2}
Let $(\lambda,{\bf x})$ be an eigenpair of a nonsingular Hermitian matrix $A$. Then for any vector ${\bf y}$, we have
\begin{equation}\label{eqn2.10}
\left|\frac{\lambda_{\min}}{\lambda}\right|\sin\angle({\bf x},{\bf y})\leq\sin\angle{({\bf x},A{\bf y})}\leq\left|\frac{\lambda_{\max}}{\lambda}\right|\sin\angle({\bf x},{\bf y}),
\end{equation}
where $\lambda_{\min}$ and $\lambda_{\max}$ are the smallest and largest magnitude eigenvalues of $A$.
\end{theorem}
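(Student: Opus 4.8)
The plan is to reduce the assertion to an elementary scalar inequality after an orthogonal decomposition. Write $P=I-{\bf x}{\bf x}^{\rm H}$ for the orthogonal projector onto ${\bf x}^{\perp}$ (recall $\|{\bf x}\|=1$). Since $A$ is nonsingular, $A{\bf y}\neq{\bf 0}$ for ${\bf y}\neq{\bf 0}$; if $P{\bf y}={\bf 0}$ then ${\bf y}$ is parallel to ${\bf x}$, both sines in \eqref{eqn2.10} are zero, and the claim is trivial, so assume $P{\bf y}\neq{\bf 0}$. From \eqref{1.1} one has $\sin\angle({\bf x},{\bf z})=\|P{\bf z}\|/\|{\bf z}\|$, hence $\sin^2\angle({\bf x},{\bf y})=\|P{\bf y}\|^2/\|{\bf y}\|^2$. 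Because $A$ is Hermitian and $A{\bf x}=\lambda{\bf x}$ with $\lambda$ real, a short computation gives $PA=AP$, so $\sin^2\angle({\bf x},A{\bf y})=\|PA{\bf y}\|^2/\|A{\bf y}\|^2=\|AP{\bf y}\|^2/\|A{\bf y}\|^2$.

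Next, split $A{\bf y}=\lambda({\bf x}^{\rm H}{\bf y}){\bf x}+AP{\bf y}$ into orthogonal pieces (the first lies in ${\rm span}\{{\bf x}\}$, the second in ${\bf x}^{\perp}$, since ${\bf x}^{\perp}$ is $A$-invariant). Setting $s=|{\bf x}^{\rm H}{\bf y}|^2$, $p=\|P{\bf y}\|^2>0$, and $r=\|AP{\bf y}\|^2/p$, one obtains $\|{\bf y}\|^2=s+p$ and $\|A{\bf y}\|^2=|\lambda|^2 s+rp$, whence
\[
\frac{\sin^2\angle({\bf x},A{\bf y})}{\sin^2\angle({\bf x},{\bf y})}=r\,\frac{s+p}{|\lambda|^2 s+rp}.
\]
Since $P{\bf y}\in{\bf x}^{\perp}$ and $A$ restricted to the invariant subspace ${\bf x}^{\perp}$ is Hermitian with eigenvalues among those of $A$, the Rayleigh quotient bounds give $\sigma_{\min}(A)^2\le r\le\sigma_{\max}(A)^2$; and because $\lambda\in\Lambda(A)$ we have $\sigma_{\min}(A)\le|\lambda|\le\sigma_{\max}(A)$ (here $\sigma_{\min}(A)=|\lambda_{\min}|$, $\sigma_{\max}(A)=|\lambda_{\max}|$).

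Finally I would verify the two scalar inequalities $\sigma_{\min}(A)^2/|\lambda|^2\le r(s+p)/(|\lambda|^2 s+rp)\le\sigma_{\max}(A)^2/|\lambda|^2$ by clearing the (positive) denominators. The upper bound is equivalent to $(r-\sigma_{\max}(A)^2)|\lambda|^2 s\le(\sigma_{\max}(A)^2-|\lambda|^2)rp$, in which the left-hand side is $\le 0$ and the right-hand side is $\ge 0$; the lower bound is equivalent to $(r-\sigma_{\min}(A)^2)|\lambda|^2 s\ge(\sigma_{\min}(A)^2-|\lambda|^2)rp$, in which the left-hand side is $\ge 0$ and the right-hand side is $\le 0$. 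Taking square roots yields \eqref{eqn2.10}. The only real subtlety — and the reason the bound is sharper than a crude estimate — is that bounding $\|AP{\bf y}\|$ and $\|A{\bf y}\|$ separately by the extreme singular values would only produce the factor $\kappa(A)$; one must keep the ${\bf x}$-component of $A{\bf y}$ scaled \emph{exactly} by $\lambda$ and use $\sigma_{\min}(A)\le|\lambda|\le\sigma_{\max}(A)$ in the scalar step. Beyond this, no genuine obstacle is expected.
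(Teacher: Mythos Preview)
Your argument is correct. Note, however, that the paper does not itself prove Theorem~\ref{Thm2.2} --- it is quoted from \cite{Vec} --- but the paper \emph{does} prove a non-Hermitian generalization (Theorem~\ref{Lem3.2}), and that proof, specialized to Hermitian $A$ and $\tau=0$, takes a genuinely different and much shorter route than yours. You work with the projector $P=I-{\bf x}{\bf x}^{\rm H}$, use the commutation $PA=AP$ afforded by the Hermitian structure to split $A{\bf y}$ orthogonally, and then reduce to a scalar inequality that relies on $\sigma_{\min}(A)\le|\lambda|\le\sigma_{\max}(A)$. The paper instead starts from the variational formula $\sin\angle({\bf x},{\bf z})=\min_{\alpha}\|{\bf x}-\alpha{\bf z}\|$ together with $A{\bf x}=\lambda{\bf x}$ to rewrite $\sin\angle({\bf x},A{\bf y})=|\lambda|^{-1}\min_{\alpha}\|A({\bf x}-\alpha{\bf y})\|$, after which both bounds follow in one line each from $\sigma_{\min}(A)\|{\bf v}\|\le\|A{\bf v}\|\le\sigma_{\max}(A)\|{\bf v}\|$. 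The paper's argument needs no projectors, no scalar case analysis, and no Hermitian hypothesis, which is precisely why it extends verbatim to $A-\tau I$ for general $A$. Your decomposition approach, by contrast, is closer in spirit to the analysis in Section~4 (Theorem~\ref{Thm2.1}), where tracking the ${\bf x}$- and ${\bf x}^{\perp}$-components explicitly is exactly what produces the sharper two-sided bounds \eqref{eqn2.8}--\eqref{eqn2.9} that improve on \eqref{eqn2.10}.
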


Second, the main result of \cite{Vec} was established. It shows that the approximation quality of the harmonic Rayleigh-Ritz procedure can be hindered by a poor conditioning of $A-\tau I$. For instance,
this can happen if the shift $\tau$ is chosen to be very close to an eigenvalue of $A$.
\begin{theorem}\cite[Theorem 3]{Vec}\label{Thm2.7}
Let $(\lambda,{\bf x})$ be an eigenpair of a Hermitian matrix $A$ and $(\widetilde{\lambda},\widetilde{\bf x})$ be a
harmonic Ritz pair with respect to the subspace $\mathcal{K}$ and shift $\tau\notin\Lambda(A)=\{\lambda_1,\lambda_2,\ldots,\lambda_n\}$. Assume
that $\Theta$ is a set of all the harmonic Ritz values and let $P_\mathcal{Q}$ be an orthogonal
projector onto $(A-\tau I)\mathcal{K}$. Then
\end{theorem}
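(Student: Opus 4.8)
The plan is to reduce the harmonic Rayleigh--Ritz procedure to an ordinary Rayleigh--Ritz procedure for the inverted shifted matrix, invoke the Hermitian form of Stewart's bound (Theorem~\ref{Thm1}) there, and then pull the resulting angles back through the linear map $A-\tau I$ by means of Theorem~\ref{Thm2.2}. Throughout, $\tau$ is taken real so that $M:=(A-\tau I)^{-1}$ is Hermitian. Rewriting the Petrov--Galerkin condition \eqref{eqn1.2} as $(A-\tau I)\widetilde{\bf x}-(\widetilde\lambda-\tau)\widetilde{\bf x}\perp(A-\tau I)\mathcal{K}$ and substituting $\widetilde{\bf x}=M\bigl[(A-\tau I)\widetilde{\bf x}\bigr]$ shows that $\bigl(\tfrac1{\widetilde\lambda-\tau},\,(A-\tau I)\widetilde{\bf x}\bigr)$ is an ordinary Ritz pair of the Hermitian matrix $M$ with respect to the $m$-dimensional subspace $\mathcal{Q}:=(A-\tau I)\mathcal{K}$, and the set of all such Ritz values is $\{\tfrac1{\theta-\tau}:\theta\in\Theta\}$. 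Since ${\bf x}$ is an eigenvector of $A$, it is an eigenvector of $M$ with eigenvalue $\tfrac1{\lambda-\tau}$; because $\lambda$ is the eigenvalue of $A$ closest to $\tau$, $\tfrac1{\lambda-\tau}$ is the \emph{largest}-magnitude eigenvalue of $M$, and $(A-\tau I){\bf x}=(\lambda-\tau){\bf x}$ is parallel to ${\bf x}$.

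Next I would apply the Hermitian case of Theorem~\ref{Thm1} to $M$, $\mathcal{Q}$, and the Ritz pair above. Let $\widehat U$ have orthonormal columns spanning the orthogonal complement of $(A-\tau I)\widetilde{\bf x}$ in $\mathcal{Q}$, set $\widehat G=\widehat U^{\rm H}M\widehat U$ and $\gamma=\|P_\mathcal{Q}M(I-P_\mathcal{Q})\|$. Using $(A-\tau I){\bf x}\parallel{\bf x}$, Theorem~\ref{Thm1} yields
\[
\sin\angle\bigl({\bf x},(A-\tau I)\widetilde{\bf x}\bigr)\le\sqrt{1+\frac{\gamma^2}{{\rm sep}\bigl(\tfrac1{\lambda-\tau},\widehat G\bigr)^2}}\;\sin\angle\bigl({\bf x},(A-\tau I)\mathcal{K}\bigr),
\]
where, $M$ being Hermitian, ${\rm sep}\bigl(\tfrac1{\lambda-\tau},\widehat G\bigr)=\min_{\theta\in\Theta,\ \theta\ne\widetilde\lambda}\bigl|\tfrac1{\lambda-\tau}-\tfrac1{\theta-\tau}\bigr|$.

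It remains to convert the two $(A-\tau I)$-transformed angles back, which is where Theorem~\ref{Thm2.2} enters, used twice. Applied to the Hermitian matrix $M=(A-\tau I)^{-1}$ with eigenpair $(\tfrac1{\lambda-\tau},{\bf x})$ and the vector $(A-\tau I)\widetilde{\bf x}$: since $\tfrac1{\lambda-\tau}$ is the largest-magnitude eigenvalue of $M$, the upper constant in \eqref{eqn2.10} equals $1$, giving $\sin\angle({\bf x},\widetilde{\bf x})\le\sin\angle\bigl({\bf x},(A-\tau I)\widetilde{\bf x}\bigr)$. Applied to the Hermitian matrix $A-\tau I$ with eigenpair $(\lambda-\tau,{\bf x})$: its smallest-magnitude eigenvalue is $|\lambda-\tau|=\sigma_{\min}(A-\tau I)$ (again because $\lambda$ is closest to $\tau$) and its largest is $\sigma_{\max}(A-\tau I)$, so $\sin\angle({\bf x},(A-\tau I){\bf y})\le\kappa(A-\tau I)\sin\angle({\bf x},{\bf y})$ for every ${\bf y}\in\mathcal{K}$; taking the minimum over ${\bf y}\in\mathcal{K}$ and using $\sin\angle({\bf x},\mathcal{K})=\min_{{\bf y}\in\mathcal{K}}\sin\angle({\bf x},{\bf y})$ (a consequence of \eqref{1.1}) gives $\sin\angle\bigl({\bf x},(A-\tau I)\mathcal{K}\bigr)\le\kappa(A-\tau I)\sin\angle({\bf x},\mathcal{K})$. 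Chaining these three inequalities produces the bound
\[
\sin\angle({\bf x},\widetilde{\bf x})\le\kappa(A-\tau I)\sqrt{1+\frac{\gamma^2}{{\rm sep}\bigl(\tfrac1{\lambda-\tau},\widehat G\bigr)^2}}\;\sin\angle({\bf x},\mathcal{K}),
\]
and, since $\gamma\le\|M\|=1/|\lambda-\tau|$, a slightly looser but cleaner version with $\gamma$ replaced by $1/|\lambda-\tau|$.

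I do not expect a genuine obstacle here: the argument is a change of variables plus two applications of Theorem~\ref{Thm2.2}, and the real work is the bookkeeping of constants. The one delicate point --- and the place where the standing hypothesis that $\lambda$ is the eigenvalue of $A$ closest to $\tau$ is indispensable --- is checking that the application of Theorem~\ref{Thm2.2} to $(A-\tau I)^{-1}$ contributes a factor of exactly $1$ rather than $\kappa(A-\tau I)$; otherwise the condition number would appear squared. One must also be careful to apply Theorem~\ref{Thm1} with the subspace $\mathcal{Q}=(A-\tau I)\mathcal{K}$ (not $\mathcal{K}$) and to measure $\gamma$ and ${\rm sep}$ with respect to $M$ acting on $\mathcal{Q}$.
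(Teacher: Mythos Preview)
Your proposal is correct and follows essentially the same route as the paper's own argument (the paper does not prove Theorem~\ref{Thm2.7} itself but proves its non-Hermitian generalization, Theorem~\ref{Thm21}, by the same three steps: recast the harmonic projection as an ordinary Rayleigh--Ritz process for $(A-\tau I)^{-1}$ on $(A-\tau I)\mathcal{K}$, apply Stewart's bound, and then transport the two angles through $A-\tau I$). The only cosmetic difference is that the paper uses the \emph{lower} bound of Theorem~\ref{Lem3.2} applied to $A-\tau I$ to pass from $\sin\angle({\bf x},(A-\tau I)\widetilde{\bf x})$ to $\sin\angle({\bf x},\widetilde{\bf x})$, whereas you use the \emph{upper} bound of Theorem~\ref{Thm2.2} applied to $(A-\tau I)^{-1}$; these are the same inequality read in opposite directions, and both reduce to a factor of $1$ precisely because $\lambda$ is the eigenvalue closest to $\tau$.
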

$$
\sin\angle({\bf x},\widetilde{\bf x})\leq\kappa(A-\tau I)\sqrt{1+\frac{\gamma^2}{\delta^2}}\sin\angle({\bf x},\mathcal{K}),
$$
where $\gamma=\|P_{\mathcal{Q}}(A-\tau I)^{-1}(I-P_\mathcal{Q})\|$,
$$
\kappa(A-\tau I)=\frac{\max_{\lambda_j\in\Lambda(A)}|\lambda_j-\tau|}{\min_{\lambda_j\in\Lambda(A)}|\lambda_j-\tau|},
$$
and
$$
\delta=\min_{\widetilde{\lambda}_j\in\Theta\backslash\lambda}\left|\frac{1}{\lambda-\tau}-\frac{1}{\widetilde{\lambda}_j-\tau}\right|.
$$

In this work, we abolish the constraint of uniform non-singularity of $B$ that required in Theorem \ref{Thm1.2}--Theorem \ref{Thm1.5}, and generalize Theorem \ref{Thm2.7} to non-Hermitian matrices. New bounds for the convergence of harmonic Ritz vectors and harmonic Ritz values are derived, which shows that the convergence of harmonic Ritz vectors is closely related to the condition number $\kappa(A-\tau I)$, while the convergence of harmonic Ritz values generally depends on the ratio ${\kappa(A-\tau I)}/{|\tau-\lambda|}$. The convergence of refined harmonic Ritz vector is also discussed.

\section{An extension of Stewart's bound on harmonic Ritz vectors}
\setcounter{equation}{0}

In this section, we take into account the convergence of harmonic Ritz vector as the distance between ${\bf x}$ and $\mathcal{K}$ approaches zero.
We first present the following theorem that is a generalization of Theorem \ref{Thm2.2} to non-Hermitian matrices.
Note that the proof is much simpler than that of Theorem \ref{Thm2.2}; see \cite[pp.4--pp.6]{Vec}.
\begin{theorem}\label{Lem3.2}
Let $(\lambda,{\bf x})$ be an eigenpair of $A$, and suppose that $\tau\neq\lambda$. Then for any nonzero vector ${\bf y}\in\mathbb{C}^{n}$, we have
\begin{equation}\label{3222}
\frac{\sigma_{\min}(A-\tau I)}{|\lambda-\tau|}\sin\angle({\bf x},{\bf y})\leq\sin\angle\big({\bf x},(A-\tau I){\bf y}\big)\leq\frac{\sigma_{\max}(A-\tau I)}{|\lambda-\tau|}\sin\angle({\bf x},{\bf y}).
\end{equation}
\end{theorem}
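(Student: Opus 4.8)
The plan is to exploit the single fact that $A-\tau I$ maps the eigenvector ${\bf x}$ to a scalar multiple of itself: from $A{\bf x}=\lambda{\bf x}$ we get $(A-\tau I){\bf x}=(\lambda-\tau){\bf x}$, hence ${\bf x}=\frac{1}{\lambda-\tau}(A-\tau I){\bf x}$, which is legitimate precisely because $\tau\neq\lambda$. Writing $M=A-\tau I$, which is nonsingular by the standing assumptions so that $M{\bf y}\neq{\bf 0}$ and the angle $\angle({\bf x},M{\bf y})$ is well defined, the whole argument reduces to the elementary two-sided norm estimate $\sigma_{\min}(M)\|{\bf v}\|\le\|M{\bf v}\|\le\sigma_{\max}(M)\|{\bf v}\|$ combined with the variational characterization of the sine of an angle.

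First I would invoke \eqref{1.1}: since $\|{\bf x}\|=1$, $\sin\angle({\bf x},M{\bf y})=\min_{\alpha\in\mathbb{C}}\|{\bf x}-\alpha M{\bf y}\|$. Substituting ${\bf x}=\frac{1}{\lambda-\tau}M{\bf x}$ and factoring out $M$ gives the identity
$$
{\bf x}-\alpha M{\bf y}=\frac{1}{\lambda-\tau}\,M\big({\bf x}-\alpha(\lambda-\tau){\bf y}\big).
$$
Since $\beta:=\alpha(\lambda-\tau)$ ranges over all of $\mathbb{C}$ as $\alpha$ does, this yields
$$
\sin\angle({\bf x},M{\bf y})=\frac{1}{|\lambda-\tau|}\min_{\beta\in\mathbb{C}}\big\|M({\bf x}-\beta{\bf y})\big\|.
$$

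Next I would insert the singular value inequalities applied to ${\bf v}={\bf x}-\beta{\bf y}$. For the upper bound, $\|M({\bf x}-\beta{\bf y})\|\le\sigma_{\max}(M)\|{\bf x}-\beta{\bf y}\|$ holds for every $\beta$, so minimizing over $\beta$ and using \eqref{1.1} once more gives $\min_{\beta}\|M({\bf x}-\beta{\bf y})\|\le\sigma_{\max}(M)\sin\angle({\bf x},{\bf y})$. For the lower bound, for every $\beta$ one has $\|M({\bf x}-\beta{\bf y})\|\ge\sigma_{\min}(M)\|{\bf x}-\beta{\bf y}\|\ge\sigma_{\min}(M)\min_{\beta'}\|{\bf x}-\beta'{\bf y}\|=\sigma_{\min}(M)\sin\angle({\bf x},{\bf y})$, and since the right-hand side no longer depends on $\beta$, taking the minimum over $\beta$ on the left preserves the inequality. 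Dividing both estimates by $|\lambda-\tau|$ produces exactly \eqref{3222}.

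There is no real obstacle in this argument; it is essentially a one-line reduction. The only two places needing a little care are the harmless change of variable $\beta=\alpha(\lambda-\tau)$ (the sole point where $\tau\neq\lambda$ enters) and the order-of-quantifiers step in the lower bound, where one must argue via the pointwise-in-$\beta$ inequality rather than naively commuting $\min_\beta$ past $\sigma_{\min}(M)$. This also makes transparent why the proof is shorter than that of Theorem \ref{Thm2.2}: no spectral decomposition of $A$ is used, only the action of $A-\tau I$ on the one vector ${\bf x}$ and global singular value bounds.
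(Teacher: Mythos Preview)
Your proposal is correct and follows essentially the same route as the paper: both proofs hinge on the identity $\sin\angle({\bf x},(A-\tau I){\bf y})=\frac{1}{|\lambda-\tau|}\min_{\beta}\|(A-\tau I)({\bf x}-\beta{\bf y})\|$ obtained from $(A-\tau I){\bf x}=(\lambda-\tau){\bf x}$ and \eqref{1.1}, followed by the singular value bounds. The only cosmetic difference is that the paper derives the lower bound by restarting from $\sin\angle({\bf x},{\bf y})$ and inserting $(A-\tau I)^{-1}(A-\tau I)$, whereas you reuse the same identity and apply $\|M{\bf v}\|\ge\sigma_{\min}(M)\|{\bf v}\|$ directly; your version is marginally more unified but mathematically identical.
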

\begin{proof}
On one hand, we have from \eqref{1.1} that
\begin{eqnarray}\label{eqn322}
\sin\angle\big({\bf x},(A-\tau I){\bf y}\big)&=&\sin\angle\big((A-\tau I){\bf x}/(\lambda-\tau),(A-\tau I){\bf y}\big)\nonumber\\
&=&\min_{\alpha\in\mathbb{C}}\frac{1}{|\lambda-\tau|}\|(A-\tau I){\bf x}-\alpha (A-\tau I){\bf y}\|\nonumber\\
&=&\frac{1}{|\lambda-\tau|}\min_{\alpha\in\mathbb{C}}\|(A-\tau I)({\bf x}-\alpha {\bf y})\|\nonumber\\
&\leq&\frac{\|A-\tau I\|}{|\lambda-\tau|}\min_{\alpha\in\mathbb{C}}\|{\bf x}-\alpha {\bf y}\|\nonumber\\
&=&\frac{\sigma_{\max}(A-\tau I)}{|\lambda-\tau|}\sin\angle\big({\bf x},{\bf y}\big).
\end{eqnarray}
On the other hand, we have
\begin{eqnarray*}
\sin\angle({\bf x},{\bf y})&=&\min_{\alpha\in\mathbb{C}}\|{\bf x}-\alpha {\bf y}\|\nonumber\\
&=&\min_{\alpha\in\mathcal{C}}\|(A-\tau I)^{-1}(A-\tau I)({\bf x}-\alpha {\bf y})\|\nonumber\\
&\leq&\|(A-\tau I)^{-1}\|\min_{\alpha\in\mathbb{C}}\|(A-\tau I){\bf x}-\alpha (A-\tau I){\bf y}\|\nonumber\\
&=&\|(A-\tau I)^{-1}\|\min_{\alpha\in\mathbb{C}}\|(\lambda-\tau){\bf x}-\alpha (A-\tau I){\bf y}\|\nonumber\\
&=&|\lambda-\tau|\|(A-\tau I)^{-1}\|\cdot\min_{\alpha\in\mathbb{C}}\|{\bf x}-\alpha (A-\tau I){\bf y}\|\nonumber\\
&=&|\lambda-\tau|\|(A-\tau I)^{-1}\|\cdot\sin\angle\big({\bf x}, (A-\tau I){\bf y}\big),
\end{eqnarray*}
and thus
\begin{equation}\label{eqn333}
\sin\angle\big({\bf x}, (A-\tau I){\bf y}\big)\geq\frac{\sigma_{\min}(A-\tau I)}{|\lambda-\tau|}\sin\angle({\bf x},{\bf y}).
\end{equation}
A combination of \eqref{eqn322} and \eqref{eqn333} yields \eqref{3222}.
\end{proof}

We are ready to consider the convergence of harmonic Ritz vector. If $\widetilde{\lambda}\neq\tau$, then \eqref{eqn1.3} can be rewritten as
\begin{equation}\label{eqn2.16}
V_{m}^{\rm H}(A-\tau I)^{\rm H}(A-\tau I)^{-1}(A-\tau I)V_m{\bf q}=\frac{1}{\widetilde{\lambda}-\tau}V_m^{\rm H}(A-\tau I)^{\rm H}(A-\tau I)V_m{\bf q},
\end{equation}
or equivalently,
\begin{equation}\label{217}
\big[(A-\tau I)V_{m}\big]^{\rm H}\left[(A-\tau I)^{-1}\Big((A-\tau I)V_m{\bf q}\Big)-\frac{1}{\widetilde{\lambda}-\tau}\Big((A-\tau I)V_m{\bf q}\Big)\right]={\bf 0}.
\end{equation}
In other words, \eqref{eqn2.16} can be understood as an orthogonal projection process for $(A-\tau I)^{-1}$ with respect to the subspace $(A-\tau I)\mathcal{K}$, and the Ritz pair in $(A-\tau I)\mathcal{K}$ is given by $\big(1/(\widetilde{\lambda}-\tau),(A-\tau I)V_m{\bf q}\big)$. Recall that the harmonic Ritz value and harmonic Ritz vector of $A$ in the subspace $\mathcal{K}$ are given by $\widetilde{\lambda}$ and $\widetilde{\bf x}=V_m{\bf q}$, respectively.
The following theorem can be viewed as an extension of Stewart's bound on harmonic Ritz vectors, or a generalization of Theorem \ref{Thm2.7} to non-Hermitian matrices.
\begin{theorem}\label{Thm21}
Let $\widehat{U}$
be an orthonormal matrix whose columns span the orthogonal complement of $(A-\tau I)\widetilde{\bf x}$
with respect to $(A-\tau I)\mathcal{K}$, and let $\widehat{G}=\widehat{U}^{\rm H}(A-\tau I)^{-1}\widehat{U}$. Then we have
\begin{equation}\label{eqn2.20}
\sin\angle({\bf x},\widetilde{\bf x})\leq\kappa(A-\tau I)\sqrt{1+\frac{\widehat{\gamma}^2}{{\rm sep}\big(1/(\lambda-\tau),\widehat{G}\big)^2}}\cdot\sin\angle({\bf x},\mathcal{K}),
\end{equation}
where $\widehat\gamma=\|P_{\mathcal{Q}}(A-\tau I)^{-1}(I-P_{\mathcal{Q}})\|$ with $P_{\mathcal{Q}}$ being the orthogonal projector on the subspace $(A-\tau I)\mathcal{K}$.
\end{theorem}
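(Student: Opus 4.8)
The plan is to exploit the identity already recorded in \eqref{eqn2.16}--\eqref{217}: the harmonic Rayleigh--Ritz process for $A$ on $\mathcal{K}$ with shift $\tau$ is nothing but a \emph{standard} orthogonal (Galerkin) projection for the matrix $M:=(A-\tau I)^{-1}$ onto the subspace $\mathcal{Q}:=(A-\tau I)\mathcal{K}$, with Ritz pair $\big(1/(\widetilde{\lambda}-\tau),\,(A-\tau I)\widetilde{\bf x}\big)$; here $\widetilde{\lambda}\neq\tau$ automatically, since $C$ is Hermitian positive definite. Moreover $(A-\tau I)^{-1}{\bf x}=\frac{1}{\lambda-\tau}{\bf x}$, so $\big(1/(\lambda-\tau),{\bf x}\big)$ is an eigenpair of $M$. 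Hence I would apply Stewart's bound (Theorem~\ref{Thm1}) with $A$ replaced by $M$, the subspace replaced by $\mathcal{Q}$, the Ritz pair replaced by $\big(1/(\widetilde{\lambda}-\tau),(A-\tau I)\widetilde{\bf x}\big)$, and $U$ replaced by $\widehat U$. Since $\widehat U$ spans, by hypothesis, the orthogonal complement of $(A-\tau I)\widetilde{\bf x}$ in $\mathcal{Q}$, the quantities occurring in Theorem~\ref{Thm1} become $G=\widehat U^{\rm H}M\widehat U=\widehat G$, ${\rm sep}(\lambda,G)={\rm sep}\big(1/(\lambda-\tau),\widehat G\big)$, and $\gamma=\|P_{\mathcal{Q}}M(I-P_{\mathcal{Q}})\|=\widehat\gamma$. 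Theorem~\ref{Thm1} then yields
$$\sin\angle\big({\bf x},(A-\tau I)\widetilde{\bf x}\big)\leq \sqrt{1+\frac{\widehat\gamma^2}{{\rm sep}\big(1/(\lambda-\tau),\widehat G\big)^2}}\;\sin\angle\big({\bf x},(A-\tau I)\mathcal{K}\big).$$

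It then remains only to transfer the two angles in this inequality from the image space $(A-\tau I)\mathcal{K}$ back to $\mathcal{K}$. For the left-hand side I would invoke the lower bound of Theorem~\ref{Lem3.2} with ${\bf y}=\widetilde{\bf x}$, giving $\sin\angle({\bf x},\widetilde{\bf x})\leq\frac{|\lambda-\tau|}{\sigma_{\min}(A-\tau I)}\sin\angle\big({\bf x},(A-\tau I)\widetilde{\bf x}\big)$. For the right-hand side I need the subspace analogue $\sin\angle\big({\bf x},(A-\tau I)\mathcal{K}\big)\leq\frac{\sigma_{\max}(A-\tau I)}{|\lambda-\tau|}\sin\angle({\bf x},\mathcal{K})$, which follows by the same device as in the proof of Theorem~\ref{Lem3.2}: for any ${\bf w}\in\mathcal{K}$, using ${\bf x}=(A-\tau I){\bf x}/(\lambda-\tau)$ we get ${\bf x}-(A-\tau I){\bf w}=(A-\tau I)\big(\tfrac{{\bf x}}{\lambda-\tau}-{\bf w}\big)$, whence $\|{\bf x}-(A-\tau I){\bf w}\|\leq\frac{\|A-\tau I\|}{|\lambda-\tau|}\|{\bf x}-(\lambda-\tau){\bf w}\|$; minimizing over ${\bf w}\in\mathcal{K}$ (equivalently over $(\lambda-\tau){\bf w}\in\mathcal{K}$) gives the claim. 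Chaining the three inequalities, the factors $|\lambda-\tau|$ cancel and $\sigma_{\max}(A-\tau I)/\sigma_{\min}(A-\tau I)=\kappa(A-\tau I)$, which is exactly \eqref{eqn2.20}.

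I do not expect a genuine obstacle here: the proof is essentially bookkeeping once the reduction \eqref{217} is in hand. The two points that need care are (i) verifying that the data $\widehat G$, $\widehat\gamma$, and the orthogonal-complement matrix $\widehat U$ appearing in the statement are precisely those that Stewart's bound produces for the transformed pair $(M,\mathcal{Q})$ --- which rests on $(A-\tau I)V_m$ being a (in general non-orthonormal) basis of $\mathcal{Q}$, valid because $A-\tau I$ is nonsingular --- and (ii) supplying the short subspace-level companion of Theorem~\ref{Lem3.2} indicated above, as only its vector-level form is stated. Finally, if ${\rm sep}\big(1/(\lambda-\tau),\widehat G\big)=0$ the bound \eqref{eqn2.20} is vacuously valid, so no case distinction is required.
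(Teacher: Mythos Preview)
Your proposal is correct and follows essentially the same route as the paper: apply Stewart's bound (Theorem~\ref{Thm1}) to the standard projection of $(A-\tau I)^{-1}$ on $(A-\tau I)\mathcal{K}$ to obtain the intermediate inequality for $\sin\angle\big({\bf x},(A-\tau I)\widetilde{\bf x}\big)$, and then transfer both angles back to $\mathcal{K}$ via Theorem~\ref{Lem3.2}. The only cosmetic difference is in handling $\sin\angle\big({\bf x},(A-\tau I)\mathcal{K}\big)$: the paper picks a single $\widehat{\bf y}\in\mathcal{K}$ realizing $\angle({\bf x},\mathcal{K})$ and applies the vector-level upper bound of Theorem~\ref{Lem3.2}, whereas you minimize directly over $\mathcal{K}$; the two arguments are equivalent.
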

\begin{proof}
As $\big(1/(\lambda-\tau),{\bf x}\big)$ is an eigenpair of $(A-\tau I)^{-1}$, if ${\rm sep}\big(1/(\lambda-\tau),\widehat{G}\big)>0$, it follows from Theorem \ref{Thm1} that
\begin{equation}\label{eqn2.17}
\sin\angle\big({\bf x},(A-\tau I)\widetilde{\bf x}\big)\leq\sin\angle\big({\bf x},(A-\tau I)\mathcal{K}\big)\sqrt{1+\frac{\widehat{\gamma}^2}{{\rm sep}\big(1/(\lambda-\tau),\widehat{G}\big)^2}}.
\end{equation}
Next we relate $\sin\angle({\bf x},(A-\tau I)\mathcal{K})$ to $\sin\angle({\bf x},\mathcal{K})$ for a general matrix $A$.
Let $\widehat{\bf y}\in\mathcal{K}$ such that $\angle({\bf x},\widehat{\bf y})=\angle({\bf x},\mathcal{K})$.
Since $A-\tau I$ is nonsingular and $\angle({\bf x},\widehat{\bf y})=\angle({\bf x},\mathcal{K})$, we have from Theorem \ref{Lem3.2} that
\begin{eqnarray}\label{36}
\sin\angle\big({\bf x},(A-\tau I)\mathcal{K}\big)&\leq&\sin\angle{\big({\bf x},(A-\tau I)\widehat{\bf y}\big)}\nonumber\\
&\leq&\frac{\sigma_{\max}(A-\tau I)}{|\lambda-\tau|}\sin\angle({\bf x},\widehat{\bf y})\nonumber\\
&=&\frac{\sigma_{\max}(A-\tau I)}{|\lambda-\tau|}\sin\angle({\bf x},\mathcal{K}).
\end{eqnarray}
Furthermore, we obtain from Theorem \ref{Lem3.2} that
\begin{equation}\label{37}
\sin\angle({\bf x},(A-\tau I)\widetilde{\bf x})\geq\frac{\sigma_{\min}(A-\tau I)}{|\lambda-\tau|}\sin\angle({\bf x},\widetilde{\bf x}).
\end{equation}
Combining \eqref{eqn2.17}--\eqref{37}, we get \eqref{eqn2.20} on the convergence of harmonic Ritz vector.
\end{proof}

\begin{rem}
Under the condition that $1/(\lambda-\tau)$ is well separated from the eigenvalues of $\widehat{G}$, Theorem \ref{Thm21} shows that the harmonic Ritz vector converges as $\angle({\bf x},\mathcal{K})\rightarrow 0$ and $\tau$ is not very close to $\lambda$.
By ``$\tau$ is not very close to $\lambda$", we mean
\begin{equation}
\kappa(A-\tau I)\sin\angle({\bf x},\mathcal{K})\ll 1,
\end{equation}
which is equivalent to
\begin{equation}\label{14}
\sigma_{\min}(A-\tau I)\gg\|A-\tau I\|\sin\angle({\bf x},\mathcal{K}).
\end{equation}
We refer to \eqref{14} as the ``{\it uniform separation condition}" of $\tau$ with respect to $\lambda$. As $|\lambda-\tau|\geq\sigma_{\min}(A-\tau I)$,
\eqref{14} means $|\lambda-\tau|$ is not small when $A$ is normal, and vice versa. In the non-normal case, however, \eqref{14} implies $|\lambda-\tau|$ is not small, but not vice versa.

Similar sufficient conditions are required to guarantee the convergence of harmonic Ritz vector \cite{Chen,Jia}.
However, under the condition that $\angle({\bf x},\mathcal{K})\rightarrow 0$ and $\lambda$ is well separated from the eigenvalues of $G_1$, both Theorem \ref{Thm1.2} and Theorem \ref{Thm1.3} require the condition that
$\|B^{-1}\|$ is uniformly bounded, which might not be satisfied. Further, the matrix $B$ can be singular or near singular even if $\tau$ is far away from $\lambda$, as the following example illustrates.



\end{rem}

\begin{example}
Consider the matrix
$$
A=\left[\begin{array}{ccc}
2 & 2 & 3\\ 0 & 1 & 4\\ 0 & 6 & 6
\\\end{array} \right],
$$
whose eigenvalues are $\{2,-2,9\}$. We want to compute the eigenvalue $\lambda=2$ and the associated eigenvector ${\bf x}=[1,0,0]^{\rm H}$. If we choose $\tau=1$ and let
$$
V_m=\left[\begin{array}{cc}
1 & 0\\ 0& 1\\ 0& 0
\end{array} \right],
$$
then $\sin\angle({\bf x},\mathcal{K})=0$ with $\mathcal{K}={\rm span}\{V_m\}$. Notice that $|\lambda-\tau|=1\gg 0$, moreover,
$$
B=V_m^{\rm H}(A-\tau I)^{\rm H}V_m=\left[\begin{array}{cc}
1 & 0\\ 2& 0
\end{array}\right]
$$
which is singular, and
$$
C=V_m^{\rm H}(A-\tau I)^{\rm H}(A-\tau I)V_m=\left[\begin{array}{cc}
1 & 2\\ 2& 40
\end{array}\right].
$$
The matrix pencil $(C,B)$ has two eigenvalues: 1 and -{\rm Inf}, i.e., the matrix pencil has an infinite eigenvalue.
All the results given in Theorem \ref{Thm1.2}--Theorem \ref{Thm1.5} do not work.
However, we find that $\kappa(A-\tau I)=10.006189420283654$, implying that $A-\tau I$ is perfectly conditioned. Indeed, the harmonic Ritz vector is $\widetilde{\bf x}=[1,0,0]^{\rm H}$ and the harmonic Ritz value is $\widetilde{\lambda}=2$, both are exact solutions.

Of course, in practice it is rare that ${\bf x}\in\mathcal{K}$ and $B$ is singular exactly. But a nearly singular $B$ is bad enough. We next perturb $V_m$ by setting the $(3,1)$ element of $V_m$ to $\varepsilon=10^{-6}$. Then we have the orthonormalized
$$
V_m=\left[\begin{array}{cc}
 0.999999999999500      &             0\\
                   0  & 1.000000000000000\\
   0.000001000000000     &              0
\end{array} \right].
$$
In this case, we have $\sin\angle({\bf x},\mathcal{K})=9.999999999995000\times 10^{-7}$,
$$
B=\left[\begin{array}{cc}
 1.000003000004000 &  0.000004000000000\\
   2.000005999999000   &              0
\end{array}\right],
$$
$$
C=\left[\begin{array}{cc}
  1.000006000049000 &  2.000035999999000\\
   2.000035999999000 & 40.000000000000000
\end{array}\right],
$$
and $\|B^{-1}\|= 2.795084971879544\times 10^5$. In terms of Theorem \ref{Thm1.2}--Theorem \ref{Thm1.5}, $\widetilde{\lambda}$ and $\widetilde{\bf x}$ will be poor approximations to $\lambda$ and ${\bf x}$, as the upper bound given by \eqref{23} is $2.830019056701876$. This means that the harmonic Ritz vector will have no accuracy at all.

However, the upper bound given by \eqref{eqn2.20} is $1.084005507313452\times 10^{-5}$, implying that the harmonic Ritz vector $\widetilde{\bf x}$ will be a good approximation to ${\bf x}$.
Indeed, the harmonic Ritz value $\widetilde{\lambda}=2.000001666687963$, the corresponding harmonic Ritz vector
$$
\widetilde{\bf x}=\left[\begin{array}{c}
  0.999999999999500  \\
  -0.000000666665352  \\
   0.000001000000000
\end{array} \right],
$$
and $\sin\angle({\bf x},\widetilde{\bf x})= 1.201849695848198\times 10^{-6}$. Thus, the harmonic Ritz pair $(\widetilde{\lambda},\widetilde{\bf x})$ is an excellent approximation to the desired eigenpair $(\lambda,{\bf x})$.
\end{example}


\section{On the convergence of harmonic Ritz values}
\setcounter{equation}{0}

In \cite{Jia}, Jia analyzed the convergence of harmonic Ritz value, and concluded that it converges as $\angle(x,\mathcal{K})\rightarrow 0$ and $B$ is uniformly nonsingular. However, as Example 1 illustrates, the uniform non-singularity of $B$ cannot be guaranteed theoretically, and $B$ may be singular or near singular even if $\tau$ is not close to $\lambda$ at all.
Thus, it is necessary to establish new theoretical bounds on the convergence of harmonic Ritz values.
In \cite[Theorem 4.1]{Jia}, better error bounds were derived, under the condition that the harmonic Ritz vector $\widetilde{\bf x}$ converges and $|\lambda-\tau|\gg\sin\angle({\bf x},\mathcal{K})$; and it was demonstrated that the harmonic projection method can miss the eigenvalue $\lambda$ if it is very close
to $\tau$.
In this section, we revisit the convergence of the harmonic Ritz value, and show that it converges as $\sin\angle(x,\mathcal{K})\rightarrow 0$ and the uniform separation condition \eqref{14} is satisfied, with no need to make the assumption that $\widetilde{\bf x}\rightarrow {\bf x}$. It is crucial to keep in mind that our new bounds
do not require the evaluation of $(A-\tau I)^{-1}$ and are computable by using $A$ only.

Recall from \eqref{eqn2.16} that  ${1}/(\widetilde{\lambda}-\tau)$ is a Ritz value of $(A-\tau I)^{-1}$ in the subspace $(A-\tau I)\mathcal{K}$.
Let $W_m$ be an orthonormal basis of $(A-\tau I)\mathcal{K}$, and let $D=W_m^{\rm H}(A-\tau I)^{-1}W_m$,
then \eqref{eqn2.16} can be rewritten as the following {\it standard} eigenproblem
\begin{equation}\label{51}
D{\bf q}=\frac{1}{\widetilde{\lambda}-\tau}{\bf q},
\end{equation}
that is, ${1}/(\widetilde{\lambda}-\tau)$ is a Ritz value of $(A-\tau I)^{-1}$ and $W_m{\bf q}$ is the associated Ritz vector in the subspace $(A-\tau I)\mathcal{K}$. Notice that $1/({\widetilde{\lambda}-\tau})$ is uniformly bounded as
$
|1/({\widetilde{\lambda}-\tau})|\leq\|D\|\leq \|(A-\tau I)^{-1}\|.
$
We mention that the reciprocals of the (shifted) harmonic Ritz values
have an application in the computation of eigenvalues in the interior of the
spectrum of a large sparse matrix \cite{Voe,Voe2}.
In \cite{Voe}, V\"{o}emel investigated harmonic Ritz values of a symmetric matrix $A$ from a slightly different perspective, and
considered a bounded functional that yields the reciprocals of the harmonic Ritz values.

By \cite[Theorem 2.1]{JS}, there is a matrix $F$ satisfying
\begin{equation}\label{eqn3.2}
\|F\|\leq\frac{\sin\angle\big({\bf x},(A-\tau I)\mathcal{K}\big)}{\sqrt{1-\sin^2\angle\big({\bf x},(A-\tau I)\mathcal{K}\big)}}\|(A-\tau I)^{-1}\|,
\end{equation}
such that ${1}/({\lambda-\tau})$ is an eigenvalue of $D+F$. Unfortunately, the upper bound established in \eqref{36} is unsuitable to apply to \eqref{eqn3.2} directly, since for a small but fixed $\sin\angle({\bf x},\mathcal{K})$, the righthand side of \eqref{36} can be much larger than 1 as $\sigma_{\max}(A-\tau I)\gg |\lambda-\tau|$. Thus, we need to seek a new upper found for $\sin\angle\big({\bf x},(A-\tau I)\mathcal{K}\big)$ in terms of $\sin\angle({\bf x},\mathcal{K})$.

Let $\widehat{X}\in\mathbb{C}^{n\times (n-1)}$ be an orthonormal basis for the orthogonal complement of the space spanned by ${\bf x}$, such that $[{\bf x},~\widehat{X}]$ is unitary. Then,
\begin{equation}\label{eqn2.1}
(A-\tau I)\big[{\bf x},~\widehat{X}\big]=\big[{\bf x},~\widehat{X}\big]\left[\begin{array}{ccc}
\lambda-\tau& {\bf w}^{\rm H}\\
{\bf 0}&R\\
\end{array} \right],
\end{equation}
where
$
{\bf w}^{\rm H}={\bf x}^{\rm H}A\widehat{X}\in\mathbb{C}^{1\times (n-1)},
$
and
\begin{equation}\label{222}
\|{\bf w}\|=\|{\bf x}^{\rm H}A\widehat{X}\|=\|{\bf x}{\bf x}^{\rm H}A\widehat{X}\|,
\end{equation}
i.e., the norm of ${\bf w}$ equals to that of the orthogonal projection of $A\widehat{X}$ on the invariant subspace ${\rm span}\{{\bf x}\}$.

For any unit vector ${\bf y}\in\mathbb{C}^n$ that is linearly independent on ${\bf x}$, we decompose ${\bf y}=\alpha{\bf x}+\widehat{X}{\bf z}$, where
\begin{equation}\label{eqn2.2}
\sin\angle({\bf x},{\bf y})=\|{\bf z}\|>0,\quad{\rm and}\quad |\alpha|=\cos\angle({\bf x},{\bf y}).
\end{equation}
In terms of \eqref{eqn2.1}, we get
$$
(A-\tau I){\bf y}=\alpha(\lambda-\tau){\bf x}+(A-\tau I)\widehat{X}{\bf z}=\big[\alpha(\lambda-\tau)+{\bf w}^{\rm H}{\bf z}\big]{\bf x}+\widehat{X}(R{\bf z}).
$$
Hence,
\begin{eqnarray}\label{eqn2.3}
\sin\angle\big({\bf x},(A-\tau I){\bf y}\big)&=&\frac{\|(I-{\bf x}{\bf x})^{\rm H}(A-\tau I){\bf y}\|}{\|(A-\tau I){\bf y}\|}
=\frac{\|\widehat{X}\widehat{X}^{\rm H}(A-\tau I){\bf y}\|}{\sqrt{|\alpha(\lambda-\tau)+{\bf w}^{\rm H}{\bf z}|^2+\|R{\bf z}\|^2}}\nonumber\\
&=&\frac{\|R{\bf z}\|}{\sqrt{|\alpha(\lambda-\tau)+{\bf w}^{\rm H}{\bf z}|^2+\|R{\bf z}\|^2}}=\frac{1}{\sqrt{1+\frac{|\alpha(\lambda-\tau)+{\bf w}^{\rm H}{\bf z}|^2}{\|R{\bf z}\|^2}}},
\end{eqnarray}
where we used $\|R{\bf z}\|\neq 0$ as $R$ is nonsingular and ${\bf z}$ is nonzero. In particular, when $A$ is normal, we have ${\bf w}={\bf 0}$ and thus
\begin{equation}
\sin\angle\big({\bf x},(A-\tau I){\bf y}\big)=\frac{1}{\sqrt{1+\frac{|\alpha(\lambda-\tau)|^2}{\|R{\bf z}\|^2}}}.
\end{equation}

On one hand, we have from \eqref{eqn2.2} that
\begin{eqnarray*}
|\alpha(\lambda-\tau)+{\bf w}^{\rm H}{\bf z}|&\leq&|\alpha|\cdot|\lambda-\tau|+|{\bf w}^{\rm H}{\bf z}|\\
&=&\cos\angle({\bf x},{\bf y})\cdot|\lambda-\tau|+\cos\angle({\bf w},{\bf z})\|{\bf w}\|\cdot\sin\angle({\bf x},{\bf y}),
\end{eqnarray*}
and
\begin{eqnarray}\label{eqn2.4}
\frac{|\alpha(\lambda-\tau)+{\bf w}^{\rm H}{\bf z}|}{\|R{\bf z}\|}&\leq&\frac{|\lambda-\tau|\cos\angle({\bf x},{\bf y})}{\sigma_{\min}(R)\cdot\sin\angle({\bf x},{\bf y})}
+\frac{\cos\angle({\bf w},{\bf z})\|{\bf w}\|}{\sigma_{\min}(R)}\nonumber\\
&\leq&\frac{|\lambda-\tau|}{\sigma_{\min}(A-\tau I)}\cdot\cot\angle({\bf x},{\bf y})
+\frac{\cos\angle({\bf w},{\bf z})\|{\bf w}\|}{\sigma_{\min}(A-\tau I)}.
\end{eqnarray}
On the other hand,
\begin{eqnarray*}
|\alpha(\lambda-\tau)+{\bf w}^{\rm H}{\bf z}|&\geq&\left||\alpha|\cdot|\lambda-\tau|-|{\bf w}^{\rm H}{\bf z}|\right|\\
&=&\big|\cos\angle({\bf x},{\bf y})|\lambda-\tau|-\cos\angle({\bf w},{\bf z})\|{\bf w}\|\cdot\sin\angle({\bf x},{\bf y})\big|,
\end{eqnarray*}
and
\begin{eqnarray}\label{eqn2.5}
\frac{|\alpha(\lambda-\tau)+{\bf w}^{\rm H}{\bf z}|}{\|R{\bf z}\|}&\geq&\frac{\big||\lambda-\tau|\cdot\cos\angle({\bf x},{\bf y})-\cos\angle({\bf w},{\bf z})\|{\bf w}\|\cdot\sin\angle({\bf x},{\bf y})\big|}{\sigma_{\max}(A-\tau I)\cdot\sin\angle({\bf x},{\bf y})}\nonumber\\
&\geq&\left|\frac{|\lambda-\tau|}{\sigma_{\max}(A-\tau I)}\cdot\cot\angle({\bf x},{\bf y})
-\frac{\cos\angle({\bf w},{\bf z})\|{\bf w}\|}{\sigma_{\max}(A-\tau I)}\right|.
\end{eqnarray}
In particular, when $A$ is normal, we have
\begin{equation}
\frac{|\lambda-\tau|}{\sigma_{\max}(A-\tau I)}\cdot\cot\angle({\bf x},{\bf y})\leq\frac{|\alpha(\lambda-\tau)|}{\|R{\bf z}\|}
\leq\frac{|\lambda-\tau|}{\sigma_{\min}(A-\tau I)}\cdot\cot\angle({\bf x},{\bf y}).
\end{equation}

Denote
\begin{equation}\label{27}
\eta_1=\frac{\cos\angle({\bf w},{\bf z})\|{\bf w}\|}{\sigma_{\min}(A-\tau I)}\quad{\rm and}\quad\eta_2=\frac{\cos\angle({\bf w},{\bf z})\|{\bf w}\|}{\sigma_{\max}(A-\tau I)},
\end{equation}
and we notice that
\begin{equation}\label{410}
\cos\angle({\bf w},{\bf z})\|{\bf w}\|\leq\|{\bf w}\|\leq\|A\|,
\end{equation}
i.e., both $\eta_1$ and $\eta_2$ are {\it uniformly bounded}.
From \eqref{eqn2.3}--\eqref{27}, we obtain the following theorem on the relationship between $\sin\angle({\bf x},(A-\tau I){\bf y})$ and $\sin\angle({\bf x},{\bf y})$. Note that the results still hold when ${\bf x}$ and ${\bf y}$ are collinear.
\begin{theorem}\label{Thm2.1}
Let $(\lambda,{\bf x})$ be an eigenpair of $A$ and $\tau\neq\lambda$. Then for any nonzero vector ${\bf y}\in\mathbb{C}^n$,
we have
\begin{equation}\label{eqn2.8}
 \Bigg\{\begin{array}{c}
\sin\angle{({\bf x},(A-\tau I){\bf y})}\geq\frac{\sin\angle({\bf x},{\bf y})}{\sqrt{\sin^2\angle({\bf x},{\bf y})+\left(\frac{|\lambda-\tau|}{\sigma_{\min}(A-\tau I)}\cos\angle({\bf x},{\bf y})
+\eta_1\sin\angle({\bf x},{\bf y})\right)^2}}\\
\sin\angle{({\bf x},(A-\tau I){\bf y})}\leq\frac{\sin\angle({\bf x},{\bf y})}{\sqrt{\sin^2\angle({\bf x},{\bf y})+\left(\frac{|\lambda-\tau|}{\sigma_{\max}(A-\tau I)}\cos\angle({\bf x},{\bf y})
-\eta_2\sin\angle({\bf x},{\bf y})\right)^2}}
\end{array},
\end{equation}
where $\eta_1$ and $\eta_2$ are defined in \eqref{27}. Specifically, when $A$ is normal, we have
 \begin{equation}\label{eqn288}
 \Bigg\{\begin{array}{c}
\sin\angle{({\bf x},(A-\tau I){\bf y})}\geq\frac{\sin\angle({\bf x},{\bf y})}{\sqrt{\sin^2\angle({\bf x},{\bf y})+\left(\frac{|\lambda-\tau|}{\sigma_{\min}(A-\tau I)}\cos\angle({\bf x},{\bf y})\right)^2}}\\
\sin\angle{({\bf x},(A-\tau I){\bf y})}\leq\frac{\sin\angle({\bf x},{\bf y})}{\sqrt{\sin^2\angle({\bf x},{\bf y})+\left(\frac{|\lambda-\tau|}{\sigma_{\max}(A-\tau I)}\cos\angle({\bf x},{\bf y})\right)^2}}
\end{array}.
\end{equation}
\end{theorem}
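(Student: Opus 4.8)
The plan is to obtain both inequalities directly from the closed-form identity \eqref{eqn2.3} together with the two one-sided estimates \eqref{eqn2.4} and \eqref{eqn2.5} that have already been established; essentially no new computation is needed. The starting point is the elementary observation that $t\mapsto 1/\sqrt{1+t^2}$ is strictly decreasing for $t\geq 0$, and that in \eqref{eqn2.3} the quantity under the radical is $1+t^2$ with $t=|\alpha(\lambda-\tau)+{\bf w}^{\rm H}{\bf z}|/\|R{\bf z}\|\geq 0$. Consequently the upper bound \eqref{eqn2.4} for $t$ yields a \emph{lower} bound for $\sin\angle({\bf x},(A-\tau I){\bf y})$, while the lower bound \eqref{eqn2.5} for $t$ yields an \emph{upper} bound.

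First I would treat the generic case in which ${\bf x}$ and ${\bf y}$ are linearly independent, so that the decomposition ${\bf y}=\alpha{\bf x}+\widehat{X}{\bf z}$ with ${\bf z}\neq{\bf 0}$ underlying \eqref{eqn2.3} is legitimate. Inserting \eqref{eqn2.4} into \eqref{eqn2.3} gives
\[
\sin\angle({\bf x},(A-\tau I){\bf y})\geq\frac{1}{\sqrt{1+\left(\frac{|\lambda-\tau|}{\sigma_{\min}(A-\tau I)}\cot\angle({\bf x},{\bf y})+\eta_1\right)^2}},
\]
and then, writing $\cot\angle({\bf x},{\bf y})=\cos\angle({\bf x},{\bf y})/\sin\angle({\bf x},{\bf y})$ and multiplying numerator and denominator inside the radical by $\sin^2\angle({\bf x},{\bf y})$, the right-hand side turns into exactly the first line of \eqref{eqn2.8}. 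The upper bound is derived in the same way: inserting \eqref{eqn2.5} into \eqref{eqn2.3} (the outer absolute value in \eqref{eqn2.5} is immaterial once squared, since $t\geq 0$) and performing the identical rearrangement produces the second line of \eqref{eqn2.8}. For the excluded collinear case ${\bf y}=\alpha{\bf x}$ one checks directly that $(A-\tau I){\bf y}=\alpha(\lambda-\tau){\bf x}$ is a multiple of ${\bf x}$, so $\sin\angle({\bf x},(A-\tau I){\bf y})=0$; on the right-hand sides of \eqref{eqn2.8} the numerator $\sin\angle({\bf x},{\bf y})$ vanishes (and the $\eta_j\sin\angle({\bf x},{\bf y})$ terms with it), while the denominators reduce to the strictly positive quantities $|\lambda-\tau|/\sigma_{\min}(A-\tau I)$ and $|\lambda-\tau|/\sigma_{\max}(A-\tau I)$ (times $\cos\angle({\bf x},{\bf y})=1$), so both inequalities hold trivially.

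Finally, the normal case \eqref{eqn288} follows by noting that when $A$ is normal so is $A-\tau I$, whence $(A-\tau I)^{\rm H}{\bf x}=\overline{(\lambda-\tau)}\,{\bf x}$ and therefore ${\bf w}^{\rm H}={\bf x}^{\rm H}(A-\tau I)\widehat{X}=(\lambda-\tau)\,{\bf x}^{\rm H}\widehat{X}={\bf 0}$; this forces $\eta_1=\eta_2=0$ in \eqref{27} and collapses \eqref{eqn2.8} to \eqref{eqn288}. The only point that calls for a little care, rather than being purely mechanical, is to keep the direction of the monotonicity argument straight — the upper bound on $t$ feeds the lower bound on the sine, and vice versa — and to be certain the rearrangement that clears $\sin\angle({\bf x},{\bf y})$ from the radical is valid, which is precisely why the collinear case, where that quantity could be zero, is peeled off separately.
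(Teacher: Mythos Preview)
Your proposal is correct and follows essentially the same approach as the paper: the paper simply states that the theorem is obtained ``From \eqref{eqn2.3}--\eqref{27}'' and remarks that the collinear case still holds, and you have supplied precisely those details --- the monotonicity of $t\mapsto 1/\sqrt{1+t^2}$, the substitution of \eqref{eqn2.4} and \eqref{eqn2.5} into \eqref{eqn2.3}, the algebraic rearrangement, the separate collinear case, and the vanishing of ${\bf w}$ when $A$ is normal.
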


When $\tau=0$ and $A$ is a nonsingular Hermitian matrix, we have the following corollary which is directly from \eqref{eqn288}.
\begin{cor}
Let $(\lambda,{\bf x})$ be an eigenpair of a nonsingular Hermitian matrix $A$. Then for any nonzero vector ${\bf y}$, we have
\begin{equation}\label{eqn2.9}
 \Bigg\{\begin{array}{c}
\sin\angle{({\bf x},A{\bf y})}\geq\frac{\sin\angle({\bf x},{\bf y})}{\sqrt{\sin^2\angle({\bf x},{\bf y})+\left(\frac{\lambda}{\lambda_{\min}(A)}\right)^2\cos^2\angle({\bf x},{\bf y})
}}\\
\sin\angle{({\bf x},A{\bf y})}\leq\frac{\sin\angle({\bf x},{\bf y})}{\sqrt{\sin^2\angle({\bf x},{\bf y})+\left(\frac{\lambda}{\lambda_{\max}(A)}\right)^2\cos^2\angle({\bf x},{\bf y})
}}
\end{array},
\end{equation}
where $\lambda_{\min}(A)$ and $\lambda_{\max}(A)$ are the smallest and largest eigenvalues of $A$ in magnitude.
\end{cor}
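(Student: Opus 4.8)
The plan is to derive \eqref{eqn2.9} as an immediate specialization of the normal-matrix bounds \eqref{eqn288} in Theorem \ref{Thm2.1} to the shift $\tau=0$. First I would note that a Hermitian matrix is in particular normal, so \eqref{eqn288} applies; and since $A$ is assumed nonsingular we have $\lambda\neq 0$ together with $\lambda_i\neq 0$ for every eigenvalue, so the hypothesis $\tau\neq\lambda$ of Theorem \ref{Thm2.1} holds with $\tau=0$ and all denominators occurring below are nonzero.

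Next I would rewrite the singular values in \eqref{eqn288} in terms of eigenvalues. For a Hermitian matrix the singular values are precisely the absolute values of the (real) eigenvalues, so with $\tau=0$ we have $\sigma_{\min}(A-\tau I)=\sigma_{\min}(A)=\min_i|\lambda_i|=|\lambda_{\min}(A)|$ and $\sigma_{\max}(A-\tau I)=\sigma_{\max}(A)=\max_i|\lambda_i|=|\lambda_{\max}(A)|$, where $\lambda_{\min}(A)$ and $\lambda_{\max}(A)$ denote the eigenvalues of $A$ smallest and largest in magnitude, as in the statement. Substituting $\tau=0$ together with these identities into \eqref{eqn288} replaces the ratios $|\lambda-\tau|/\sigma_{\min}(A-\tau I)$ and $|\lambda-\tau|/\sigma_{\max}(A-\tau I)$ by $|\lambda|/|\lambda_{\min}(A)|$ and $|\lambda|/|\lambda_{\max}(A)|$, respectively, while $(A-\tau I){\bf y}$ becomes $A{\bf y}$.

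Finally, since $\lambda$, $\lambda_{\min}(A)$ and $\lambda_{\max}(A)$ are all real, $\bigl(|\lambda|/|\lambda_{\min}(A)|\bigr)^2=\lambda^2/\lambda_{\min}(A)^2=\bigl(\lambda/\lambda_{\min}(A)\bigr)^2$, and likewise for $\lambda_{\max}(A)$; pulling the square through the absolute values puts the radicands of \eqref{eqn288} into exactly the form displayed in \eqref{eqn2.9}, which proves both inequalities simultaneously. I do not expect a genuine obstacle here: the argument is essentially bookkeeping on top of Theorem \ref{Thm2.1}, the only mild points being the identification of the singular values of a Hermitian matrix with the magnitudes of its eigenvalues and the triviality that squaring a real ratio erases the moduli. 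As a sanity check one may verify that \eqref{eqn2.9} is at least as sharp as the linear two-sided estimate of Theorem \ref{Thm2.2}.
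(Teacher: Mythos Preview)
Your proposal is correct and matches the paper's own justification: the corollary is stated as following ``directly from \eqref{eqn288}'' upon setting $\tau=0$ for a nonsingular Hermitian $A$, which is exactly the specialization you carry out. The identifications $\sigma_{\min}(A)=|\lambda_{\min}(A)|$ and $\sigma_{\max}(A)=|\lambda_{\max}(A)|$ for Hermitian $A$ are the only substantive observations needed, and you have them.
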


\begin{rem}
We stress that the upper and lower bounds given in \eqref{eqn2.9} are sharper than those in \eqref{eqn2.10}. Indeed, it is straightforward to verify that
$$
\frac{\sin\angle({\bf x},{\bf y})}{\sqrt{\sin^2\angle({\bf x},{\bf y})+\left(\frac{\lambda}{\lambda_{\max}(A)}\right)^2\cos^2\angle({\bf x},{\bf y})
}}\leq\left|\frac{\lambda_{\max}}{\lambda}\right|\sin\angle({\bf x},{\bf y}),
$$
and
$$
\frac{\sin\angle({\bf x},{\bf y})}{\sqrt{\sin^2\angle({\bf x},{\bf y})+\left(\frac{\lambda}{\lambda_{\min}(A)}\right)^2\cos^2\angle({\bf x},{\bf y})
}}\geq\left|\frac{\lambda_{\min}}{\lambda}\right|\sin\angle({\bf x},{\bf y}).
$$
\end{rem}

The following result relates $\sin\angle\big({\bf x},(A-\tau I)\mathcal{K}\big)$ to $\sin\angle({\bf x},\mathcal{K})$ for a general matrix.
\begin{cor}\label{Cor2.2}
Let $(\lambda,{\bf x})$ be an eigenpair of $A$ and let $\tau\neq\lambda$. Given a subspace $\mathcal{K}$, let $\widehat{\bf z}=\widehat{X}^{\rm H}\widehat{\bf y}$, where $\widehat{\bf y}\in\mathcal{K}$ such that $\angle({\bf x},\widehat{\bf y})=\angle({\bf x},\mathcal{K})$. Then we have
\begin{equation}\label{eqn3.3}
\sin\angle\big({\bf x},(A-\tau I)\mathcal{K}\big)\leq\frac{\sin\angle({\bf x},\mathcal{K})}{\sqrt{\sin^2\angle({\bf x},\mathcal{K})+\left(\frac{|\lambda-\tau|}{\sigma_{\max}(A-\tau I)}\cos\angle({\bf x},\mathcal{K})
-\widehat{\eta}_2\sin\angle({\bf x},\mathcal{K})\right)^2}},
\end{equation}
where
$$
\widehat{\eta}_2=\frac{\cos\angle({\bf w},\widehat{\bf z})\|{\bf w}\|}{\sigma_{\max}(A-\tau I)},
$$
and $\widehat{X}$ and ${\bf w}$ are defined in \eqref{eqn2.1}.
Specifically, if $A$ is normal, we have
\begin{equation}\label{eqn383}
\sin\angle\big({\bf x},(A-\tau I)\mathcal{K}\big)\leq\frac{\sin\angle({\bf x},\mathcal{K})}{\sqrt{\sin^2\angle({\bf x},\mathcal{K})+\left(\frac{|\lambda-\tau|}{\sigma_{\max}(A-\tau I)}\cos\angle({\bf x},\mathcal{K})\right)^2}},
\end{equation}
\end{cor}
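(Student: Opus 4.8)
The plan is to deduce \eqref{eqn3.3} from the upper bound in Theorem~\ref{Thm2.1} by specializing it to the vector of $\mathcal{K}$ that best approximates ${\bf x}$. Let $\widehat{\bf y}\in\mathcal{K}$ be a unit vector with $\angle({\bf x},\widehat{\bf y})=\angle({\bf x},\mathcal{K})$ (when ${\bf x}\in\mathcal{K}$ we may take $\widehat{\bf y}={\bf x}$). Since $A-\tau I$ is nonsingular, $(A-\tau I)\widehat{\bf y}$ is a nonzero vector of the subspace $(A-\tau I)\mathcal{K}$, so the definition of the angle between a vector and a subspace gives
$$
\sin\angle\big({\bf x},(A-\tau I)\mathcal{K}\big)\leq\sin\angle\big({\bf x},(A-\tau I)\widehat{\bf y}\big),
$$
and it suffices to bound the right-hand side. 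I would then apply the upper inequality of \eqref{eqn2.8} with ${\bf y}=\widehat{\bf y}$: for this choice the decomposition \eqref{eqn2.2} reads $\widehat{\bf y}=\alpha{\bf x}+\widehat{X}\widehat{\bf z}$ with $\widehat{\bf z}=\widehat{X}^{\rm H}\widehat{\bf y}$, while $\sin\angle({\bf x},\widehat{\bf y})=\sin\angle({\bf x},\mathcal{K})$, $\cos\angle({\bf x},\widehat{\bf y})=\cos\angle({\bf x},\mathcal{K})$, and the constant $\eta_2$ of \eqref{27} becomes exactly $\widehat{\eta}_2=\cos\angle({\bf w},\widehat{\bf z})\|{\bf w}\|/\sigma_{\max}(A-\tau I)$. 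Substituting these identifications into the upper bound of \eqref{eqn2.8} yields \eqref{eqn3.3}. The edge case ${\bf x}\in\mathcal{K}$ causes no trouble, because Theorem~\ref{Thm2.1} was noted to remain valid when ${\bf x}$ and ${\bf y}$ are collinear.

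For the normal case one simply recalls, as observed just before Theorem~\ref{Thm2.1}, that ${\bf w}={\bf x}^{\rm H}A\widehat{X}={\bf 0}$ when $A$ is normal; hence $\widehat{\eta}_2=0$ and \eqref{eqn3.3} reduces to \eqref{eqn383}. Equivalently, one may re-run the argument above starting directly from the normal-case upper bound \eqref{eqn288}.

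Since the statement is a direct specialization of Theorem~\ref{Thm2.1}, there is essentially no genuine obstacle; the proof is a change of notation plus the one-line subspace-versus-vector inequality. The only point worth flagging is that the quantity $\frac{|\lambda-\tau|}{\sigma_{\max}(A-\tau I)}\cos\angle({\bf x},\mathcal{K})-\widehat{\eta}_2\sin\angle({\bf x},\mathcal{K})$ inside the square in the denominator of \eqref{eqn3.3} need not be nonnegative. This does not affect the validity of the bound, since it appears only squared and the right-hand side of \eqref{eqn3.3} is therefore always at most $1$ — which is precisely why \eqref{eqn3.3}, unlike the cruder estimate \eqref{36}, can safely be substituted into \eqref{eqn3.2}. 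Moreover the bound is informative whenever this term stays bounded away from $0$, which holds as $\sin\angle({\bf x},\mathcal{K})\to 0$ because $\widehat{\eta}_2$ is uniformly bounded by \eqref{410} and $\tau\neq\lambda$; it then produces the desired $\kappa(A-\tau I)/|\lambda-\tau|$-type dependence once the uniform separation condition \eqref{14} is in force.
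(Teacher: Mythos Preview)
Your proposal is correct and follows essentially the same approach as the paper: both use the subspace-versus-vector inequality $\sin\angle\big({\bf x},(A-\tau I)\mathcal{K}\big)\leq\sin\angle\big({\bf x},(A-\tau I)\widehat{\bf y}\big)$ and then apply the upper bound of \eqref{eqn2.8} with ${\bf y}=\widehat{\bf y}$, substituting $\angle({\bf x},\widehat{\bf y})=\angle({\bf x},\mathcal{K})$ and identifying $\eta_2$ with $\widehat{\eta}_2$; the normal case is handled identically via \eqref{eqn288} (equivalently, ${\bf w}={\bf 0}$). Your additional remarks on the edge case ${\bf x}\in\mathcal{K}$ and on the sign of the squared term are accurate but not needed for the bare corollary.
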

\begin{proof}
We have from \eqref{eqn2.8} and $\angle({\bf x},\widehat{\bf y})=\angle({\bf x},\mathcal{K})$ that
\begin{eqnarray*}
\sin\angle\big({\bf x},(A-\tau I)\mathcal{K}\big)&\leq&\sin\angle{\big({\bf x},(A-\tau I)\widehat{\bf y}\big)}
\\
&\leq&\frac{\sin\angle({\bf x},\widehat{\bf y})}{\sqrt{\sin^2\angle({\bf x},\widehat{\bf y})+\left(\frac{|\lambda-\tau|}{\sigma_{\max}(A-\tau I)}\cos\angle({\bf x},\widehat{\bf y})-\widehat{\eta}_2\sin^2\angle({\bf x},\widehat{\bf y})\right)^2}}\\
&=&\frac{\sin\angle({\bf x},\mathcal{K})}{\sqrt{\sin^2\angle({\bf x},\mathcal{K})+\left(\frac{|\lambda-\tau|}{\sigma_{\max}(A-\tau I)}\cos\angle({\bf x},\mathcal{K})
-\widehat{\eta}_2\sin\angle({\bf x},\mathcal{K})\right)^2}},
\end{eqnarray*}
and we can prove \eqref{eqn383} by using \eqref{eqn288} as $A$ is normal.
\end{proof}

We are now in a position to consider the convergence of harmonic Ritz values.
The proof is along the line of \cite{JS}:
First, we will show that if $\angle{({\bf x},\mathcal{K})}$ is small
then ${1}/({\widetilde{\lambda}-\tau})$ is an eigenvalue of a matrix $D+F$ that is near $D$. Second, we
make use of Elsner's theorem \cite{Els} to show that $D$ must have an eigenvalue that is near that of
$(A-\tau I)^{-1}$. We first have the following theorem.


\begin{theorem}\label{Thm3.1}
Under the above notations, if $|\lambda-\tau|\neq\tan\angle({\bf x},\mathcal{K})\cdot\cos\angle({\bf w},\widehat{\bf z})\|{\bf w}\|$,
then there is a matrix $F$ satisfying
\begin{equation}\label{eqn3.1}
\|F\|\leq\frac{\kappa(A-\tau I)\cdot\sin\angle({\bf x},\mathcal{K})}{\big||\lambda-\tau|\cos\angle({\bf x},\mathcal{K})-\sin\angle({\bf x},\mathcal{K})\cos\angle({\bf w},\widehat{\bf z})\|{\bf w}\|\big|},
\end{equation}
such that $\frac{1}{\lambda-\tau}$ is an eigenvalue of $D+F$, where $\widehat{\bf z}$ is defined in Corollary \ref{Cor2.2}.
Specifically, if $A$ is normal and $\lambda\neq\tau$,
we have
\begin{equation}\label{eqn}
\|F\|\leq\frac{\kappa(A-\tau I)}{|\lambda-\tau|}\tan\angle({\bf x},\mathcal{K}).
\end{equation}
\end{theorem}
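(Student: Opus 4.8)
The plan is to chain together two facts already recorded in the excerpt: the perturbation estimate \cite[Theorem 2.1]{JS} restated in \eqref{eqn3.2}, and the refined bound of Corollary \ref{Cor2.2} relating $\sin\angle\big({\bf x},(A-\tau I)\mathcal{K}\big)$ to $\sin\angle({\bf x},\mathcal{K})$. Recall from \eqref{51} that $1/(\widetilde{\lambda}-\tau)$ is a Ritz value of $(A-\tau I)^{-1}$ in the subspace $(A-\tau I)\mathcal{K}$ with Rayleigh quotient matrix $D=W_m^{\rm H}(A-\tau I)^{-1}W_m$, and that $\big(1/(\lambda-\tau),{\bf x}\big)$ is an eigenpair of $(A-\tau I)^{-1}$. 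Hence \eqref{eqn3.2} already furnishes a matrix $F$ with $1/(\lambda-\tau)\in\Lambda(D+F)$ and $\|F\|\le\tan\angle\big({\bf x},(A-\tau I)\mathcal{K}\big)\,\|(A-\tau I)^{-1}\|$, so everything reduces to bounding this tangent by a multiple of $\sin\angle({\bf x},\mathcal{K})$.

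First I would convert the sine estimate of Corollary \ref{Cor2.2} into a tangent estimate. Write $s=\sin\angle({\bf x},\mathcal{K})$, $c=\cos\angle({\bf x},\mathcal{K})$, and $t=\frac{|\lambda-\tau|}{\sigma_{\max}(A-\tau I)}c-\widehat{\eta}_2 s$, so that \eqref{eqn3.3} reads $\sin\angle\big({\bf x},(A-\tau I)\mathcal{K}\big)\le s/\sqrt{s^2+t^2}$. The hypothesis $|\lambda-\tau|\ne\tan\angle({\bf x},\mathcal{K})\cos\angle({\bf w},\widehat{\bf z})\|{\bf w}\|$ is, after multiplying through by $\cos\angle({\bf x},\mathcal{K})$, exactly the statement $t\ne 0$; consequently the right-hand side $s/\sqrt{s^2+t^2}$ is strictly less than $1$. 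Since $a\mapsto a/\sqrt{1-a^2}$ is increasing on $[0,1)$ and its value at $s/\sqrt{s^2+t^2}$ simplifies to $s/|t|$, it follows that $\tan\angle\big({\bf x},(A-\tau I)\mathcal{K}\big)\le s/|t|$.

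Next I would substitute $\widehat{\eta}_2=\cos\angle({\bf w},\widehat{\bf z})\|{\bf w}\|/\sigma_{\max}(A-\tau I)$, which gives $|t|=\big||\lambda-\tau|c-s\cos\angle({\bf w},\widehat{\bf z})\|{\bf w}\|\big|/\sigma_{\max}(A-\tau I)$, and feed the resulting bound $\tan\angle\big({\bf x},(A-\tau I)\mathcal{K}\big)\le\sigma_{\max}(A-\tau I)\,s/\big||\lambda-\tau|c-s\cos\angle({\bf w},\widehat{\bf z})\|{\bf w}\|\big|$ into \eqref{eqn3.2}. Using $\|(A-\tau I)^{-1}\|=1/\sigma_{\min}(A-\tau I)$ together with $\sigma_{\max}(A-\tau I)/\sigma_{\min}(A-\tau I)=\kappa(A-\tau I)$ collapses the product into exactly the bound \eqref{eqn3.1}. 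For the normal case I would either note that ${\bf w}={\bf 0}$ forces $\widehat{\eta}_2=0$, so \eqref{eqn3.1} specialises immediately to \eqref{eqn}, or apply \eqref{eqn383} in place of \eqref{eqn3.3} and repeat the tangent argument with $t=\frac{|\lambda-\tau|}{\sigma_{\max}(A-\tau I)}c$.

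There is no deep obstacle here; the proof is essentially careful bookkeeping. The one step that genuinely needs attention is the tangent conversion in the middle paragraph: one must verify that the excluded case $t=0$ is precisely the degenerate situation in which the sine bound from Corollary \ref{Cor2.2} becomes vacuous (equal to $1$), so that $\cos\angle\big({\bf x},(A-\tau I)\mathcal{K}\big)\ne 0$ and monotonicity of $a\mapsto a/\sqrt{1-a^2}$ may legitimately be applied to an upper bound. Once this is in place, the remaining algebra — cancelling the common $\sqrt{s^2+t^2}$ and recognising the singular-value ratio as $\kappa(A-\tau I)$ — is routine.
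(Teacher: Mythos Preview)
Your proposal is correct and follows essentially the same route as the paper's own proof: combine the perturbation bound \eqref{eqn3.2} (giving $\|F\|\le\tan\angle({\bf x},(A-\tau I)\mathcal{K})\,\|(A-\tau I)^{-1}\|$) with the sine estimate \eqref{eqn3.3} from Corollary~\ref{Cor2.2}, convert the latter to a tangent bound via monotonicity of $a\mapsto a/\sqrt{1-a^2}$, and simplify using $\sigma_{\max}(A-\tau I)\|(A-\tau I)^{-1}\|=\kappa(A-\tau I)$. Your explicit identification of the hypothesis $|\lambda-\tau|\ne\tan\angle({\bf x},\mathcal{K})\cos\angle({\bf w},\widehat{\bf z})\|{\bf w}\|$ with the nondegeneracy condition $t\ne 0$ (needed so the sine bound is strictly less than~$1$ and the tangent conversion is legitimate) is a point the paper's proof passes over silently, but otherwise the two arguments coincide.
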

\begin{proof}
By \eqref{eqn3.2} and \eqref{eqn3.3},
\begin{eqnarray}\label{ee416}
\|F\|&\leq&\frac{\sin\angle({\bf x},\mathcal{K})\cdot\|(A-\tau I)^{-1}\|}{\sqrt{\sin^2\angle({\bf x},\mathcal{K})+\left(\frac{|\lambda-\tau|}{\sigma_{\max}(A-\tau I)}\cos\angle({\bf x},\mathcal{K})
-\widehat{\eta}_2\sin\angle({\bf x},\mathcal{K})\right)^2-\sin^2\angle({\bf x},\mathcal{K})}}\nonumber\\
&=&\frac{\sin\angle({\bf x},\mathcal{K})\cdot\|(A-\tau I)^{-1}\|}{\left|\frac{|\lambda-\tau|}{\sigma_{\max}(A-\tau I)}\cos\angle({\bf x},\mathcal{K})
-\widehat{\eta}_2\sin\angle({\bf x},\mathcal{K})\right|}\nonumber\\
&=&\frac{\kappa(A-\tau I)\cdot\sin\angle({\bf x},\mathcal{K})}{\big||\lambda-\tau|\cos\angle({\bf x},\mathcal{K})-\sin\angle({\bf x},\mathcal{K})\cos\angle({\bf w},\widehat{\bf z})\|{\bf w}\|\big|}\nonumber,
\end{eqnarray}
and we obtain \eqref{eqn} from \eqref{eqn3.2} and \eqref{eqn383} when $A$ is normal.
\end{proof}
\begin{rem}
By \eqref{410}, $\cos\angle({\bf w},\widehat{\bf z})\|{\bf w}\|$ is uniformly bounded. Therefore, the condition $|\lambda-\tau|\neq\tan\angle({\bf x},\mathcal{K})\cdot\cos\angle({\bf w},\widehat{\bf z})\|{\bf w}\|$ is not stringent as
the uniform separation condition \eqref{14} is satisfied and $\angle({\bf x},\mathcal{K})\rightarrow 0$.
Indeed, as $\angle({\bf x},\mathcal{K})\rightarrow 0$ such that $\cos\angle({\bf w},\widehat{\bf z})\|{\bf w}\|\ll|\lambda-\tau|\cot\angle({\bf x},\mathcal{K})$, we have from \eqref{eqn3.1} that
\begin{equation}\label{3.4}
\|F\|\lesssim\frac{\kappa(A-\tau I)}{|\lambda-\tau|}\tan\angle({\bf x},\mathcal{K})\longrightarrow 0.
\end{equation}
In other words, $\|F\|$ will be {uniformly bounded} and tend to zero as $\angle({\bf x},\mathcal{K})\rightarrow 0$ and the uniform separation condition \eqref{14} is satisfied. Note that the uniform boundness of $\|E\|$ in Theorem \ref{Thm1.4} can not be guaranteed theoretically; refer to \eqref{eqn17}.

\end{rem}

We will present the following corollary on the convergence of harmonic Ritz values. Here we make use of Elsner's theorem \cite{Els} which says that, given matrices $A$ and $\widetilde{A}$ of order $n$, for any eigenvalue $\lambda$ of $A$, there is an eigenvalue $\nu$ of $\widetilde{A}$ satisfying
\begin{equation}\label{eqn3.5}
|\lambda-\nu|\leq(\|A\|+\|\widetilde{A}\|)^{1-\frac{1}{n}}\|A-\widetilde{A}\|^{\frac{1}{n}}.
\end{equation}

\begin{cor}\label{Cor3.1}
Suppose that $\lambda\neq\tau$ and $\widetilde{\lambda}$ is not an infinite eigenvalue of the matrix pencil $(C,B)$.
If $|\lambda-\tau|\neq\tan\angle({\bf x},\mathcal{K})\cdot\cos\angle({\bf w},\widehat{\bf z})\|{\bf w}\|$, then there is an eigenvalue $\frac{1}{\widetilde{\lambda}-\tau}$ of $D$ such that
\begin{equation}\label{eqn35}
\left|\frac{1}{\lambda-\tau}-\frac{1}{\widetilde{\lambda}-\tau}\right|\leq\left(2{\|(A-\tau I)^{-1}\|}+\|F\|\right)^{1-\frac{1}{m}}\|F\|^{\frac{1}{m}},
\end{equation}
where $m$ is the order of $D$.
\end{cor}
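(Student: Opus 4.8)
The plan is to obtain \eqref{eqn35} as a direct consequence of Theorem~\ref{Thm3.1} combined with Elsner's theorem \eqref{eqn3.5}, applied to the pair of $m$-by-$m$ matrices $D+F$ and $D$. Under the stated hypothesis $|\lambda-\tau|\neq\tan\angle({\bf x},\mathcal{K})\cdot\cos\angle({\bf w},\widehat{\bf z})\|{\bf w}\|$, Theorem~\ref{Thm3.1} furnishes a perturbation matrix $F$, whose norm is controlled by \eqref{eqn3.1}, such that $\frac{1}{\lambda-\tau}$ is an eigenvalue of $D+F$. Setting $A=D+F$ and $\widetilde{A}=D$ in \eqref{eqn3.5} and invoking it for the eigenvalue $\frac{1}{\lambda-\tau}$ of $D+F$, I would get an eigenvalue $\nu$ of $D$ with
$$
\left|\frac{1}{\lambda-\tau}-\nu\right|\leq\big(\|D+F\|+\|D\|\big)^{1-\frac{1}{m}}\|F\|^{\frac{1}{m}}.
$$

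Next I would replace the two norms on the right by quantities involving only $\|(A-\tau I)^{-1}\|$ and $\|F\|$. Because $W_m$ has orthonormal columns, $\|D\|=\|W_m^{\rm H}(A-\tau I)^{-1}W_m\|\leq\|(A-\tau I)^{-1}\|$, and then the triangle inequality gives $\|D+F\|\leq\|D\|+\|F\|\leq\|(A-\tau I)^{-1}\|+\|F\|$; adding the two estimates yields $\|D+F\|+\|D\|\leq 2\|(A-\tau I)^{-1}\|+\|F\|$. Substituting this into the previous display produces exactly the right-hand side of \eqref{eqn35}.

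Finally, it remains to recognize that $\nu$ has the claimed form $\frac{1}{\widetilde{\lambda}-\tau}$. By \eqref{51}, the eigenvalues of $D$ are precisely the reciprocals of the shifted, finite harmonic Ritz values; since we assume $\widetilde{\lambda}$ is not an infinite eigenvalue of the pencil $(C,B)$, every eigenvalue of $D$ can be written as $\frac{1}{\widetilde{\lambda}-\tau}$ for a genuine harmonic Ritz value $\widetilde{\lambda}$, so the eigenvalue $\nu$ delivered by Elsner's theorem is of this form, which completes the argument.

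I do not expect a genuine obstacle: the corollary is essentially a repackaging of Theorem~\ref{Thm3.1} with Elsner's perturbation bound. The only point requiring a little care is to carry out the norm estimates so that the final bound is expressed purely through $\|(A-\tau I)^{-1}\|$ (equivalently $1/\sigma_{\min}(A-\tau I)$) and $\|F\|$, rather than through $\|D\|$, so that once $\|F\|$ is estimated from \eqref{eqn3.1} the right-hand side of \eqref{eqn35} is computable from $A$ alone, consistent with the discussion preceding the corollary.
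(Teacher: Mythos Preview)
Your proposal is correct and follows essentially the same approach as the paper: apply Elsner's theorem \eqref{eqn3.5} to the pair $D$ and $D+F$ (with $F$ supplied by Theorem~\ref{Thm3.1}), and use $\|D\|\leq\|(A-\tau I)^{-1}\|$ together with the triangle inequality to obtain the stated bound. Your additional remark identifying $\nu$ with a reciprocal $\frac{1}{\widetilde{\lambda}-\tau}$ via \eqref{51} is a welcome clarification that the paper leaves implicit.
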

\begin{proof}
The result is from applying Elsner's theorem to $D$ and $D+F$, as well as the fact that $\|D\|\leq \|(A-\tau I)^{-1}\|$.
\end{proof}

It was pointed out that the refined harmonic Ritz vector and the harmonic Ritz value share the same sufficient conditions for convergence \cite[pp.1453]{Jia}, i.e., both require that $\angle({\bf x},\mathcal{K})\rightarrow 0$ and $\|B^{-1}\|$ is uniformly bounded (or $B$ is uniformly nonsingular). From Theorem \ref{Thm3.1}, Corollary \ref{Cor3.1}, and Theorem 5.1 of \cite{Jia}, we conclude that the refined harmonic Ritz vector converges as the distance between ${\bf x}$ and $\mathcal{K}$ tends to zero and $\tau$ is satisfied with the {uniform separation condition} \eqref{14}. Hence, Theorem \ref{Thm3.1} and Corollary \ref{Cor3.1} enhance the bounds in \cite{Jia} for refined harmonic Ritz pairs.
Furthermore, our new bounds have some other possible applications to refined harmonic projection methods. For example, they can be applied to the harmonic Lanczos bidiagonalization method for computing a partial SVD of a large matrix \cite{JN}, improving the bounds there for harmonic Ritz pairs approximating the desired singular triplets.

\section{Conclusion}

Given a search subspace and a target point $\tau$, we focus on the convergence of the harmonic Ritz vector and harmonic Ritz value in this paper. The results given in \cite{Chen,Jia} show that the sufficient conditions for the convergence of harmonic Ritz vector include: \\
{\tt (i)} good enough search subspace; \\
{\tt (ii)} $\|B^{-1}\|$ is uniformly bounded, which can not be guaranteed theoretically, even if $\tau$ is not close to the desired eigenvalue $\lambda$; \\
{\tt (iii)} the other uniform separation condition that the desired eigenvalue is uniformly away from the other approximate eigenvalues; 
while the sufficient conditions for the convergence of harmonic Ritz value and refined harmonic Ritz vector include both {\tt (i)} and {\tt (ii)}.

In this paper, we indicate that the sufficient conditions for the convergence of harmonic Ritz vector include {\tt(i)}, {\tt(iii)}, and\\ 
{\tt(iv)} the uniform separation condition \eqref{14}; 
while the sufficient conditions for the convergence of harmonic Ritz value and refined harmonic Ritz vector include both {\tt (i)} and {\tt (iv)}.

Note that the other uniform separation condition {\tt(iii)}, i.e., the {\it sep} function in each of the a priori bounds for Ritz or harmonic Ritz vector, is not guaranteed to be uniformly bounded away from zero, and may indeed be arbitrarily close to and even equal zero for standard and harmonic projection methods; see examples in \cite{Jia,Stewart}. It is this intrinsic deficiency that leads to the introduction of refined (harmonic) Ritz vector. The greatest merit of refined (harmonic) Ritz vector is that its convergence does not require this condition.

\section*{Acknowledgments}
We would like to express our thanks to Prof. Zhong-xiao Jia for pointing out the reference \cite{Voe} and for very insightful comments that greatly improved the representation of this paper.

{\small

}
\end{document}